\documentclass[10pt]{amsart}

\usepackage{amsmath,amsthm}
\usepackage{amsfonts}
\usepackage{amssymb}
\usepackage{enumerate}

\usepackage[usenames]{color}

\newcommand{\vol}{\operatorname{vol}}
\newcommand{\inj}{\operatorname{inj}}

\newcommand{\reals}{\mathbb{R}}

\newcommand{\dvol}{\operatorname{dvol}}

\newcommand{\tr}{\operatorname{tr}}
\newcommand{\II}{\operatorname{II}}
\newcommand{\graph}{\operatorname{graph}}

\newcommand{\proj}{\operatorname{proj}}

\newtheorem{theorem}{Theorem}[section]
\newtheorem{lemma}[theorem]{Lemma}

\newtheorem{proposition}[theorem]{Proposition}
\theoremstyle{definition}
\newtheorem{definition}{Definition}[section]
\newtheorem{question}{Question}

\newtheorem{remark}{Remark}
\numberwithin{equation}{section}

\begin{document}

\title[Singular Time of the Mean Curvature Flow]{A Characterization of the Singular Time of the Mean Curvature Flow}
\author{Andrew A. Cooper}
\address{Department of Mathematics\\
Michigan State University\\
East Lansing, Michigan 48824}
\email{andrew.a.cooper@gmail.com}
\thanks{The author was partially supported by an RTG Research Training in Geometry and Topology NSF grant DMS 0353717 and as a graduate student on NSF grant DMS 06-04759.}
\subjclass[2000]{Primary 53C44}
\date{}

\keywords{Mean curvature flow}

\begin{abstract}
In this note we investigate the behaviour at finite-time singularities of the mean curvature flow of compact Riemannian submanifolds $M_t^m\hookrightarrow (N^{m+n},h)$. We show that they are characterized by the blow-up of a trace $A=H\cdot\II$ of the square of the second fundamental form.
\end{abstract}

\maketitle

\section{Introduction}

It is well known that the mean curvature flow $\partial_t F=H$ of submanifolds \linebreak $\displaystyle{F_t:M^m\hookrightarrow\reals^{m+n}}$ has finite-time singularities characterized by the blowup of the second fundamental form $\II$:

\begin{theorem}[Huisken \cite{huisken84}]\label{huisken} Suppose $T<\infty$ is the first singular time for a compact mean curvature flow.  Then $\max_{M_t}|\II|\rightarrow \infty$ as $t\rightarrow T$.\end{theorem}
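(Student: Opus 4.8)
The plan is to argue by contradiction. Suppose $T<\infty$ is the first singular time but, contrary to the claim, $\max_{M_t}|\II|\le C_0<\infty$ for all $t\in[0,T)$; I will show this forces the flow to extend smoothly across $T$, contradicting the maximality of $T$.

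\emph{First}, I would control the evolving metric. Along the flow $\partial_t g_{ij}=-2\,H\cdot\II_{ij}$, so the bound on $|\II|$ (hence on $|H|$) gives $|\partial_t g_{ij}|\le C_1\,g_{ij}$ pointwise; integrating over the finite interval $[0,T)$ shows that all the metrics $g(t)$ are uniformly equivalent to $g(0)$ on the compact manifold $M=M^m$. Consequently the Christoffel symbols, the induced volume form, and the intrinsic curvature (bounded via the Gauss equation by $|\II|^2$) stay uniformly controlled, so all constants appearing later can be taken uniform in $t$.

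\emph{Next} comes the technical core: a Bernstein-type bootstrap for the covariant derivatives of $\II$. In $\reals^{m+n}$ there are no ambient curvature terms, and the evolution equation has the schematic form $\partial_t\nabla^k\II=\Delta\nabla^k\II+\sum_{i+j+\ell=k}\nabla^i\II*\nabla^j\II*\nabla^\ell\II$, whence
\[
\partial_t|\nabla^k\II|^2\le\Delta|\nabla^k\II|^2-2|\nabla^{k+1}\II|^2+C_k\!\!\sum_{i+j+\ell=k}\!\!|\nabla^i\II|\,|\nabla^j\II|\,|\nabla^\ell\II|\,|\nabla^k\II|.
\]
I would prove by induction on $k$ that $\sup_{M\times[0,T)}|\nabla^k\II|<\infty$: the case $k=0$ is the hypothesis, and in the inductive step one notes that in each summand at least two of the three factors have order $\le k-1$ and are therefore already bounded, so the right-hand side is at most $\Delta|\nabla^k\II|^2+C(|\nabla^k\II|^2+1)$; the parabolic maximum principle together with Gr\"onwall's inequality then bound $\max_M|\nabla^k\II|^2$ on the finite interval $[0,T)$. (Alternatively one runs the classical argument with a test quantity like $|\nabla^k\II|^2+B|\nabla^{k-1}\II|^2$ and interpolation inequalities, legitimate thanks to the previous step.) I expect the main obstacle to be the careful derivation of these evolution equations — commuting covariant derivatives, and keeping track of the interplay between the ambient normal geometry and the induced connection — and verifying that all constants depend only on $m$, $n$, $C_0$, $T$, and the geometry of $M_0$.

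\emph{Finally}, with $g(t)$ uniformly equivalent to $g(0)$ and all $|\nabla^k\II|$ bounded, the immersions $F(\cdot,t)$ are bounded in every $C^k$ norm uniformly on $[0,T)$; since $\partial_t F=H$ is bounded they converge uniformly as $t\to T$, and the spatial derivative bounds upgrade this, via Arzel\`a--Ascoli, to $C^\infty$ convergence to a smooth map $F(\cdot,T):M\hookrightarrow\reals^{m+n}$, which is an immersion because $g(t)$ is bounded below. Short-time existence for the mean curvature flow started from the smooth compact immersion $F(\cdot,T)$ then produces a smooth solution on $[T,T+\varepsilon)$, so the flow is not singular at $T$ — contradicting the choice of $T$. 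Hence $\max_{M_t}|\II|\to\infty$ as $t\to T$.
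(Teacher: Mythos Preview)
Your argument is the standard one and is correct. Note, however, that the paper does not supply its own proof of this theorem: it is quoted as a background result of Huisken \cite{huisken84}, so there is nothing in the paper to compare your proof against line by line.

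That said, the paper does isolate the step you call the ``technical core'' as Theorem~\ref{bounds} (also attributed to Huisken), namely that $\sup_{M\times[0,T)}|\nabla^s\II|$ is controlled by $\sup_{M\times[0,T)}|\II|$. The paper's one-line description of how this is obtained---``by integrating the evolution equation for $|\nabla^s\II|$ and using the H\"older and Morrey inequalities''---is an $L^2$/energy route (integrate, interpolate, then pass to $C^0$ via Sobolev/Morrey), whereas you outline the pointwise maximum-principle route (Gr\"onwall on $\max_M|\nabla^k\II|^2$, or the coupled quantity $|\nabla^k\II|^2+B|\nabla^{k-1}\II|^2$). Both are standard and yield the same conclusion; the maximum-principle version is more elementary and localizes better, while the integral version handles the cross terms via interpolation without needing the inductive ``two of three factors are lower order'' observation. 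Either path, followed by your metric-equivalence and Arzel\`a--Ascoli extension argument, closes the contradiction exactly as Huisken does.
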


We will prove that in fact it suffices to consider the tensor $A_{ij}=H^\alpha h_{ij\alpha}$, where $H=\tr\II$ is the mean curvature and $h$ are the components of $\II$:
 \begin{theorem}\label{main} Let $(N,h)$ be a Riemannian manifold with bounded geometry. Suppose $T<\infty$ is the first singular time for a mean curvature flow of compact submanifolds of $(N,h)$.  Then $\max_{M_t}|A|\rightarrow \infty$ as $t\rightarrow T$.\end{theorem}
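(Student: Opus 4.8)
The plan is to argue by contradiction. Suppose $\sup_{M\times[0,T)}|A|=:C<\infty$; I will show the flow then extends smoothly past $T$, contradicting minimality of $T$. Two consequences of the bound come for free. Since $g^{ij}A_{ij}=H^\alpha H_\alpha=|H|^2$, tracing gives $0\le|H|^2=\tr_gA\le\sqrt m\,|A|\le\sqrt m\,C$, so the mean curvature is bounded. And $A$ governs the evolution of the induced metric: $\partial_tg_{ij}=-2\langle H,\II_{ij}\rangle=-2A_{ij}$, hence $e^{-2Ct}g(0)\le g(t)\le e^{2Ct}g(0)$ as quadratic forms and $g(t)=g(0)-2\int_0^tA\,ds$ converges uniformly on $M$ to a non-degenerate metric $g(T)$; since $|\partial_tF|_h=|H|_h$ is bounded, $F_t$ also converges uniformly to a continuous limit $F_T\colon M\to N$. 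Thus neither the intrinsic metric nor the position degenerates, and by the standard extension criterion — the contrapositive of Theorem~\ref{huisken}, valid here because a bound on $|\II|$ for a compact flow in a manifold of bounded geometry gives, via interior estimates, uniform $C^\infty$ control of $M_t$ up to $T$ — it suffices to bound $|\II|$ on $[0,T)$.

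To do that I would blow up. If $|\II|$ is unbounded, select $t_k\uparrow T$ and $p_k\in M_{t_k}$ with $\lambda_k:=|\II|(p_k,t_k)\to\infty$ in the manner of Hamilton's point-picking; passing to $h$-normal coordinates on $N$ at $F_{t_k}(p_k)$ and rescaling parabolically by $\lambda_k$ (bounded geometry of $N$ makes the dilated ambient metrics converge to the Euclidean one, and the monotonicity formula supplies the local area bounds needed for compactness), a subsequence of the rescaled flows converges to a complete eternal limit flow $\widetilde M^\infty_s\subset\reals^{m+n}$ with $|\widetilde\II|\le1$ and $|\widetilde\II|=1$ at the space-time origin. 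Under this rescaling $|\widetilde A|=\lambda_k^{-2}|A|\le\lambda_k^{-2}C\to0$, so $\widetilde A\equiv0$; tracing, $\widetilde H\equiv0$, so every time-slice is a minimal submanifold of $\reals^{m+n}$, hence stationary, so $\widetilde M^\infty_s\equiv\Sigma$ for a single complete embedded minimal submanifold with $|\II_\Sigma|\le1$ and $|\II_\Sigma|=1$ somewhere. For $m=1$ this is already impossible ($\Sigma$ is a straight line, $\II_\Sigma\equiv0$); for $m\ge2$ one eliminates it via Huisken's monotonicity formula.

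Indeed, centered at a point $x_0$ where the curvature of the original flow genuinely concentrates as $t\to T$, the monotonicity formula — in Ecker's localized form, whose error terms are controlled by the bounded geometry of $N$ and disappear in the limit — shows that a suitable tangent flow exists, is self-shrinking, and has constant Gaussian density; the hypothesis $|A|\le C$ again forces its mean curvature to vanish, so it satisfies $x^\perp\equiv0$, i.e.\ it is a minimal cone, and since the second fundamental form of a non-flat cone blows up like $r^{-1}$ at the vertex while $|\widetilde\II|\le1$, the cone must be a flat plane. A multiplicity-one plane as tangent flow contradicts, through the local regularity theorem, that $x_0$ was a concentration point, so the only escape is a higher-multiplicity plane. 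I expect this last point — excluding higher multiplicity and reconciling the point-picking blow-up with the tangent-flow blow-up — to be the main obstacle; it should be dispatched by noting that a higher-multiplicity flat-plane tangent flow corresponds to nearly flat sheets merging, along which $|\II|$ stays bounded, so such a point cannot be where curvature concentrates. Once this is settled, $\sup_{M_t}|\II|$ stays bounded on $[0,T)$, the flow extends past $T$, and the contradiction is complete; the bounded geometry of $N$ and the assumption that $T$ is the \emph{first} singular time are used exactly at this step.
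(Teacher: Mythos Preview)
Your opening moves are fine: the bound on $|A|$ indeed gives $|H|\le C$ and $|\partial_tg|\le 2C$, so the induced metrics are uniformly equivalent and $F_t$ stays in a compact region. The point-picking blow-up producing a smooth limit with $|\widetilde\II|\le 1$, $|\widetilde\II|(p_\infty,0)=1$, and $\widetilde H\equiv 0$ is also reasonable.

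The trouble begins with the $m\ge 2$ case. A complete minimal submanifold of $\reals^{m+n}$ with bounded second fundamental form need not be flat (the helicoid in $\reals^3$ is an example), so the point-picking limit $\Sigma$ alone gives no contradiction. You then pivot to a tangent-flow argument, but this is a \emph{different} rescaling: the tangent flow at a fixed spacetime point is obtained by dilating by factors $(T-t)^{-1/2}$, and without a Type~I assumption there is no reason the rescaled $|\II|$ stays bounded. Your claim ``$|\widetilde\II|\le 1$'' therefore does not apply to the tangent flow, so you cannot conclude the minimal cone is smooth at the vertex, hence a plane. Even granting that step, the multiplicity issue you flag is genuine and your sketch for disposing of it is not an argument; higher-multiplicity planar tangent flows are precisely the scenario in which curvature concentrates without a local Type~I bound, and ruling them out in high codimension is a well-known open difficulty.

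The paper avoids all of this by never trying to classify the limit as a submanifold. It takes only the \emph{intrinsic} Cheeger--Gromov limit $(M_\infty,g_\infty,p_\infty)$ of the rescaled Riemannian manifolds and proves directly that it has exactly Euclidean volume growth: since $|\partial_tg|=2|A|\le 2C$, the metrics $g(t)$ are uniformly continuous in $t$, so balls at time $t_j$ are sandwiched between balls at a fixed earlier time $t_{j_0}$; combined with the volume-form evolution $\partial_t\dvol=-|H|^2\dvol$ and $|H|\le C$, this forces $\vol_\infty(B_\infty(r))=\omega_m r^m$ for all $r$. The small-ball expansion then gives $R_\infty(p_\infty)=0$. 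But the twice-traced Gauss equation yields $R_j(p_j)=|H_j(p_j)|^2-|\II_j(p_j)|^2\le CQ_j^{-2}-1\to -1$, a clean contradiction. No submanifold compactness, no monotonicity formula, no tangent flows, no multiplicity issues.
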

 
By slightly modifying the proof of Theorem \ref{main}, we also obtain
\begin{theorem}\label{blowup} Suppose that along the flow, $|\II(x,t)|^p(T-t)\leq C$ for some $p\in(1,2]$.  Then $\max_{M_t}|H|\rightarrow \infty$ as $t\rightarrow T$.\end{theorem}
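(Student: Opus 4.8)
The plan is to run the argument behind Theorem~\ref{main} a second time, with a time-dependent bound on $A$ in place of a uniform one. Suppose, for contradiction, that $\max_{M_t}|H|$ stays bounded, say $|H|\le H_0$ on $M\times[0,T)$. Since $A_{ij}=H^\alpha h_{ij\alpha}$ is the inner product of the mean curvature vector with $\II_{ij}$, it obeys the pointwise Cauchy--Schwarz bound $|A|\le|H|\,|\II|$, so the hypothesis $|\II|^p(T-t)\le C$ forces
\[
|A(x,t)|\ \le\ H_0\,|\II(x,t)|\ \le\ H_0\,C^{1/p}\,(T-t)^{-1/p}\qquad\text{on }M\times[0,T).
\]
Here is where $p>1$ enters: then $1/p<1$, so the right-hand side is integrable in $t$ over $[0,T)$, and $\lambda^{-2}(T-t)^{-1/p}\to0$ along the parabolic rescalings $\tilde F=\lambda F$ one performs near a singular time --- there $\lambda\to\infty$, and in fact $\lambda^{-2}\lesssim T-t$ by the standard lower bound on the curvature at a first singular time, so that $\lambda^{-2}(T-t)^{-1/p}\lesssim(T-t)^{1-1/p}\to0$. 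The restriction $p\le2$ plays no role beyond keeping the hypothesis consistent with $T<\infty$ being singular: at a first singular time $\max_{M_t}|\II|^2(T-t)$ is bounded below, so for $p>2$ there is nothing to prove.

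Next recall how a bound on $|A|$ is used in the proof of Theorem~\ref{main}. Under such a bound one shows $\max_{M_t}|\II|$ cannot blow up, which contradicts Theorem~\ref{huisken}: if $\max_{M_t}|\II|\to\infty$ one rescales parabolically at a singular point and, using that $(N,h)$ has bounded geometry, extracts a nontrivial limit flow in $\reals^{m+n}$; a dilation by $\lambda$ sends $\II\mapsto\lambda^{-1}\II$ and $H\mapsto\lambda^{-1}H$, hence $A\mapsto\lambda^{-2}A$, so the limit flow has $A\equiv0$, hence $|H|^2=\tr_gA\equiv0$, hence $H\equiv0$; a static ($H\equiv0$) limit, however, cannot be a nontrivial singularity model, and one derives a contradiction exactly as in the proof of Theorem~\ref{main} (via Huisken's monotonicity formula). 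In this argument $|A|$ enters only through the combinations $\lambda^{-2}\sup_{M_t}|A|$, in the rescaling, and $\int_0^T|A|\,dt$, in the estimates (such as the metric distortion, since $\partial_t g_{ij}=-2A_{ij}$) that make the blow-up limit exist; by the displayed inequality these still tend to $0$, resp.\ remain finite, precisely because $p>1$. Hence the same chain of implications produces a contradiction, and $\max_{M_t}|H|\to\infty$ as $t\to T$.

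The only real subtlety is this last bookkeeping: confirming that the proof of Theorem~\ref{main} nowhere genuinely needs a bound on $|A|$ uniform up to $t=T$, only the integrated and rescaled forms above, so that it is exactly $1/p<1$ that is consumed. Everything else repeats the proof of Theorem~\ref{main} verbatim.
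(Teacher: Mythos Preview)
Your reduction is correct and matches the paper's: assuming $|H|\le H_0$, one bounds $|A|\le|H|\,|\II|\le H_0C^{1/p}(T-t)^{-1/p}$, and since $p>1$ this is integrable on $[0,T)$, so the metric distortion $\partial_t g=-2A$ is under control and the machinery of Theorem~\ref{main} applies. The paper is slightly more explicit than you on one point: it is not just $\int_0^T|A|\,dt<\infty$ but uniform continuity of the metrics in the sense of Lemma~\ref{glickenstein} that Proposition~\ref{intrinsic} consumes, and the paper checks this by noting that $t\mapsto(T-t)^{1-1/p}$ is uniformly continuous on $[0,T)$ (together with Lemma~\ref{metricequiv} to pass between $|\cdot|_0$ and $|\cdot|_{g(t)}$).

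Your description of \emph{how} Theorem~\ref{main} reaches a contradiction, however, is wrong, and the mechanism you substitute would not obviously close. The paper does not invoke Huisken's monotonicity formula, nor does it argue that ``a static ($H\equiv0$) limit cannot be a nontrivial singularity model.'' Instead it shows (Proposition~\ref{intrinsic}) that the pointed limit $(M_\infty,g_\infty,p_\infty)$ has exactly Euclidean volume growth, whence the small-ball expansion forces the scalar curvature $R_\infty(p_\infty)=0$; the twice-traced Gauss equation then gives $R_\infty(p_\infty)=\lim_j(|H_j|^2-|\II_j|^2)(p_j)\le\lim_j C/Q_j^2-1=-1$, the contradiction. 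The bounds on $|A|$ and $|H|$ enter only through Lemma~\ref{glickenstein} and the evolution of $\dvol$ in that volume-growth argument. Your observation that $A\mapsto\lambda^{-2}A$ under rescaling is true but plays no role, and your proposed monotonicity route would need a genuine type~I hypothesis to conclude the blow-up is a self-shrinker; the assumption $|\II|^p(T-t)\le C$ with $p<2$ is strictly weaker than type~I, so that route is not available here without further argument.
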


\section{Preliminaries}
First we recall some evolution equations for the flow.  We use indices \linebreak$\displaystyle{1\leq i,j,k,l\leq m}$, $\displaystyle{m+1\leq \alpha,\beta,\gamma\leq m+n}$. $h_{ij\alpha}$ denotes the $\alpha$th component of $\II(\partial_i,\partial_j)$.  $H^\alpha$ denotes the $\alpha$th component of the mean curvature $H$.  $g_{ij}$ denotes the induced metric on $M$.  $\overline{R}$ with four indices denotes the Riemannian curvature of $(N,h)$, and $\overline{R}$ with two indices denotes the Ricci curvature of $(N,h)$.  $\nabla_i$ denotes the tangential covariant derivative in the direction $i$. $\overline{\nabla}$ denotes the covariant derivative of $h$.  We use the summation convention on upper and lower indices.

\begin{lemma}[Huisken \cite{huisken84}, Wang \cite{wang01}] Along a mean curvature flow \linebreak $M_t\hookrightarrow (N,h)$, we have
	\begin{enumerate}\item $\partial_t g_{ij}=-2H^\alpha h_{ij\alpha}$
	\item $\partial_t \dvol = -|H|^2\dvol$
	\item \begin{equation*}\begin{split}\partial_t h_{ij\alpha}=&\Delta h_{ij\alpha} + (\overline{\nabla}_k\overline{R})_{\alpha ij}^{\ \ \ k}+(\overline{\nabla}_j\overline{R})_{\alpha k i}^{\ \ \ k}-2\overline{R}_{lijk}h_{\alpha}^{\ l k}\\
	&+2\overline{R}_{\alpha\beta jk}h^{k\ \beta}_{\ i}+2\overline{R}_{\alpha\beta ik}h^{k\ \beta}_{\ j}-\overline{R}_{lki}^{\ \ \ k}h_{j\ \alpha}^{\ l}-\overline{R}_{lkj}^{\ \ \ k}h_{i \ \alpha}^{\ l}+\overline{R}_{\alpha k\beta}^{\ \ \ k}h^{\ \ \beta}_{ij}\\
	&+ h_{il\alpha}(h_{lk\gamma}h_{j}^{\ k\gamma}-h_{kj\gamma}H^\gamma)+ h_{lk\alpha}(h^{lk}_{\ \ \gamma}h_{ij}^{\ \ \gamma}-h_{lj\gamma}h_i^{\ k\gamma})\\
	&+ h_{ik\beta}(h_l^{\ k\beta}h_{\alpha\ j}^{\ l}-h_{lj}^{\ \ \beta}h_{\alpha}^{\ lk})-h_{\alpha jk}h_{\beta i}^{\ \ k}H^\beta+h_{ij\beta}\langle e_\beta,\overline{\nabla}_He_\alpha\rangle\end{split}\end{equation*}
	\end{enumerate}\end{lemma}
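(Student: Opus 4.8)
The plan is to derive the three evolution equations by direct computation in a local orthonormal frame, following Huisken's original arguments (adapted to the general ambient manifold, as in Wang). For (1), I would differentiate the induced metric $g_{ij}=h(\partial_i F,\partial_j F)$ along the flow and commute $\partial_t$ with $\partial_i$; since $\partial_t F=H$ is normal, the derivative $\partial_i H$ contributes only its tangential part, which by the Weingarten relation equals $-H^\alpha h_{ij\alpha}$ in each slot, giving $\partial_t g_{ij}=-2H^\alpha h_{ij\alpha}$. For (2), I would use the standard formula $\partial_t \dvol = \tfrac12 g^{ij}(\partial_t g_{ij})\,\dvol$; substituting (1) yields $\partial_t\dvol = -g^{ij}H^\alpha h_{ij\alpha}\,\dvol = -|H|^2\dvol$, using $g^{ij}h_{ij\alpha}=H_\alpha$.

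The substantive computation is (3), the evolution of the second fundamental form. I would compute $\partial_t h_{ij\alpha}$ by writing $h_{ij\alpha}=\langle \overline{\nabla}_{\partial_i}\partial_j F, e_\alpha\rangle$, differentiating in $t$, and commuting $\partial_t$ past the spatial derivatives. This produces a term $\overline{\nabla}_i\overline{\nabla}_j H$ (the Hessian of the mean curvature vector), which must be converted into $\Delta h_{ij\alpha}$ plus lower-order terms via a Simons-type identity: one commutes covariant derivatives, picking up ambient curvature terms $\overline{R}$ and their derivatives $\overline{\nabla}\,\overline{R}$, and applies the Codazzi and Gauss equations to reexpress intrinsic curvature in terms of $\II$ and $\overline{R}$. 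The remaining terms come from the $t$-dependence of the frame $\{e_\alpha\}$, which must be evolved so as to remain orthonormal and normal — this is the source of the $h_{ij\beta}\langle e_\beta,\overline{\nabla}_H e_\alpha\rangle$ term — and from the variation of the connection coefficients, which contributes the quadratic-in-$\II$ terms. Collecting everything gives the displayed formula.

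I expect the main obstacle to be organizational rather than conceptual: keeping track of the numerous curvature and quadratic terms with correct index placement and signs, and being careful about the conventions for $\overline{R}$ and for the normal connection. In particular the conversion $\overline{\nabla}_i\overline{\nabla}_j H \to \Delta h_{ij\alpha} + (\text{curvature terms})$ requires commuting three derivatives past one another and invoking the Codazzi equation $(\nabla_k h)_{ij\alpha} - (\nabla_i h)_{kj\alpha} = -\overline{R}_{\alpha jik}$ (in the appropriate convention), and it is here that essentially all of the $\overline{R}$-terms in the statement are generated. Since the formula is quoted from Huisken \cite{huisken84} and Wang \cite{wang01}, I would either reproduce their computation verbatim in a local frame or simply cite it; for the purposes of this paper only the structure of the equation — namely $\partial_t h = \Delta h + \overline{\nabla}\,\overline{R} \ast 1 + \overline{R}\ast h + h\ast h\ast h$ — will be used, so a careful statement with a pointer to the references suffices.
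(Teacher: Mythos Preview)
Your proposal is correct, and in fact the paper itself does not prove this lemma at all: it is stated with attribution to Huisken and Wang and used as a black box, exactly as you suggest at the end of your proposal. Your outlined derivation of (1)--(3) is the standard one and would work if a proof were required.
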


By integrating the evolution equation for $|\nabla^s\II|$ and using the H\"{o}lder and Morrey inequalities, one can obtain 

\begin{theorem}[Huisken \cite{huisken84}]\label{bounds} Along the mean curvature flow, \linebreak $\displaystyle{\sup_{M\times [0,T)}|\nabla^s\II|}$ is bounded in terms of $\displaystyle{\sup_{M\times [0,T)}|\II|}$ and the ambient geometry bounds.\end{theorem}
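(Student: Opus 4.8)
The plan is an induction on $s$, following the route indicated by the hint. The base case $s=0$ is the hypothesis that $\Lambda:=\sup_{M\times[0,T)}|\II|$ is finite. For the inductive step, suppose $\sup_{M\times[0,T)}|\nabla^j\II|\le C_j$ for every $j<s$, where $C_j$ depends only on $\Lambda$, the bounds on the geometry of $(N,h)$, and the initial submanifold. Differentiating the evolution equation for $h_{ij\alpha}$ from the Lemma $s$ times, and commuting $\nabla^s$ past $\Delta$ and past $\partial_t$ --- at the cost of commutator terms in which the intrinsic curvature of $M_t$ and its covariant derivatives get re-expressed, through the Gauss and Codazzi equations, in terms of $\overline{R}$, its covariant derivatives, and $\II,\nabla\II,\dots,\nabla^{s-1}\II$ --- one obtains a reaction-diffusion equation $\partial_t\nabla^s\II=\Delta\nabla^s\II+Q_s$, in which $Q_s$ is a sum of contractions (with the metrics on $M$ and on the normal bundle) of factors drawn from $\{\II,\nabla\II,\dots,\nabla^s\II\}$ and from $\{\overline{R},\overline{\nabla}\,\overline{R},\dots,\overline{\nabla}^{\,s+1}\overline{R}\}$, its leading part being the cubic expression $\sum_{a+b+c=s}\nabla^a\II*\nabla^b\II*\nabla^c\II$.

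Taking the inner product with $\nabla^s\II$ and using $\Delta|\nabla^s\II|^2=2\langle\Delta\nabla^s\II,\nabla^s\II\rangle+2|\nabla^{s+1}\II|^2$, and absorbing the contributions of $\partial_t g$ and of the time-derivative of the normal metric and connection (all of the form $\II*\II*\nabla^s\II$ or of lower order) into $Q_s$, the decisive point is that the only way the top-order tensor $\nabla^s\II$ can occur \emph{quadratically} in $\langle Q_s,\nabla^s\II\rangle$ is through the term with a single factor carrying all $s$ derivatives, i.e. $\II*\II*\nabla^s\II$, which is bounded by $c|\II|^2|\nabla^s\II|^2\le c\Lambda^2|\nabla^s\II|^2$. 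In every other term of the cubic part all three factors carry at most $s-1$ derivatives --- hence are controlled by the inductive constants $C_j$ --- and every term involving $\overline{R}$ and its derivatives is controlled by the bounded-geometry hypothesis. Thus, with $C_*=C_*(s,\Lambda,\{C_j\}_{j<s},N)$,
\[
\partial_t|\nabla^s\II|^2\le\Delta|\nabla^s\II|^2-2|\nabla^{s+1}\II|^2+c\,\Lambda^2\,|\nabla^s\II|^2+C_*.
\]
Producing and organizing this inequality --- in particular checking that the coefficient of the quadratic top-order term involves only $|\II|$ (this is exactly what lets the induction close) and that no covariant derivative of $\overline{R}$ beyond order $s+1$ intervenes --- is the real work of the proof; what follows is soft.

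To finish, integrate as the hint prescribes. For an integer $p\ge1$, multiply the displayed inequality by $p|\nabla^s\II|^{2p-2}$, integrate over $M_t$, use $\partial_t\dvol=-|H|^2\dvol\le0$, and integrate by parts: the terms arising from $\Delta|\nabla^s\II|^2$ and from $-2|\nabla^{s+1}\II|^2$ combine into a single nonpositive quantity $-c(p)\int_{M_t}|\nabla(|\nabla^s\II|^p)|^2\,\dvol$, while the lower-order term $C_*\int_{M_t}|\nabla^s\II|^{2p-2}\dvol$ and any lower-order products are absorbed by Young's (and, in an interpolation version, Hölder's) inequality. A Gronwall estimate on the finite interval $[0,T)$ then bounds $\int_{M_t}|\nabla^s\II|^{2p}\dvol$ uniformly in $t$. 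Taking $2p>m$ and applying the Morrey (Sobolev) inequality on $(M,g(t))$ --- whose constants are uniformly controlled, because $-2\sqrt m\,\Lambda^2\,g(t)\le\partial_t g(t)\le 2\sqrt m\,\Lambda^2\,g(t)$ forces all the induced metrics to be uniformly equivalent on $[0,T)$ --- yields $\sup_{M\times[0,T)}|\nabla^s\II|<\infty$, with the asserted dependence, completing the induction. (Equivalently, and more cheaply: $M$ being compact, the scalar maximum principle applied to $u=|\nabla^s\II|^2$ against the comparison ODE $\dot v=c\Lambda^2v+C_*$ gives $\max_M|\nabla^s\II|^2(t)\le e^{c\Lambda^2t}(\max_{M_0}|\nabla^s\II|^2+C_*t)$ outright; the integral version is the one needed if one wants the bound to be independent of the initial data, in which case one inserts the time weight $t^s$ into the integrals.)
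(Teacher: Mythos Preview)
The paper does not supply a proof of this theorem; it merely cites Huisken and offers the one-line hint ``by integrating the evolution equation for $|\nabla^s\II|$ and using the H\"older and Morrey inequalities.'' Your proposal follows precisely this indicated route and fills in the standard details correctly (and the parenthetical maximum-principle shortcut you mention at the end is also valid).
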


We recall a few lemmas about one-parameter families of Riemannian metrics:
\begin{lemma}[Glickenstein \cite{glickenstein03}]\label{glickenstein} Suppose a one-parameter family of complete Riemannian manifolds $(M, g(t))$ is uniformly continuous in $t$, that is, for any $t_0$ and any $\epsilon>0$ there is $\delta>0$ so that $(1-\epsilon)g(t_0)\leq g(t)\leq (1+\epsilon)g(t_0)$ for $t\in [t_0,t_0+\delta]$.  Then for any $p\in M$, $r>0$, the metric balls centred at $p$ satisfy:
	\begin{equation*}\begin{split}B_{g(t_0)}(p,\frac{r}{\sqrt{1+\epsilon}})\subseteq B_{g(t)}(p,r)\subseteq B_{g(t_0)}(p,\frac{r}{\sqrt{1-\epsilon}})\end{split}\end{equation*}\end{lemma}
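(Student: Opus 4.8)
The plan is to reduce the two ball inclusions to a single two-sided comparison of the distance functions $d_{g(t_0)}$ and $d_{g(t)}$, which in turn follows from comparing lengths of curves. So fix $t\in[t_0,t_0+\delta]$, for which the hypothesis gives $(1-\epsilon)g(t_0)\leq g(t)\leq(1+\epsilon)g(t_0)$ as quadratic forms.

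First I would observe that this pointwise bound says precisely that for every tangent vector $v$ one has $\sqrt{1-\epsilon}\,|v|_{g(t_0)}\leq|v|_{g(t)}\leq\sqrt{1+\epsilon}\,|v|_{g(t_0)}$. Applying this to $\dot\gamma(s)$ and integrating in $s$, any piecewise-$C^1$ curve $\gamma$ joining $p$ to $q$ satisfies $\sqrt{1-\epsilon}\,L_{g(t_0)}(\gamma)\leq L_{g(t)}(\gamma)\leq\sqrt{1+\epsilon}\,L_{g(t_0)}(\gamma)$. Taking the infimum over all such $\gamma$ yields
\begin{equation*}
\sqrt{1-\epsilon}\; d_{g(t_0)}(p,q)\;\leq\;d_{g(t)}(p,q)\;\leq\;\sqrt{1+\epsilon}\; d_{g(t_0)}(p,q).
\end{equation*}
Completeness is not actually needed for this step, but it guarantees the infima are realized by minimizing geodesics, which is the form in which the estimate is used later.

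It then remains to unwind the definitions of the metric balls. If $q\in B_{g(t_0)}(p,r/\sqrt{1+\epsilon})$, then $d_{g(t)}(p,q)\leq\sqrt{1+\epsilon}\,d_{g(t_0)}(p,q)<r$, so $q\in B_{g(t)}(p,r)$, giving the first inclusion; conversely, if $q\in B_{g(t)}(p,r)$, then $\sqrt{1-\epsilon}\,d_{g(t_0)}(p,q)\leq d_{g(t)}(p,q)<r$, hence $d_{g(t_0)}(p,q)<r/\sqrt{1-\epsilon}$, giving the second.

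I do not expect a substantive obstacle here; the argument is elementary. The only thing to watch is the bookkeeping: keeping the factors $\sqrt{1\pm\epsilon}$ on the correct side when passing from the comparison of metrics to that of lengths and then of distances, and matching each of the two distance inequalities to the inclusion it produces.
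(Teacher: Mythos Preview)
Your proof is correct and follows essentially the same approach as the paper's: compare lengths of curves via the pointwise metric inequality, deduce the two-sided distance comparison, and read off the ball inclusions. The only cosmetic difference is that the paper phrases one direction using a minimizing $g(t_0)$-geodesic (hence invoking completeness) and then declares the other ``analogous,'' whereas you take infima over all curves and handle both inclusions explicitly.
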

\begin{proof} Let $p,q\in M$.  Let $\gamma:[0,S]\rightarrow M$ be a minimising geodesic from $p$ to $q$ for the metric $g(t_0)$.  Then the distance $d_{g(t_0)}(p,q)$ in the metric $g(t_0)$ satisfies
	\begin{equation}\begin{split}d_{g(t_0)}(p,q)=&\int_0^S |\dot{\gamma}|_{g(t_0)}(s)ds\\
	\geq& \frac{1}{\sqrt{1+\epsilon}}\int_0^S|\dot{\gamma}|_{g(t)}(s)ds\\
	\geq & \frac{1}{\sqrt{1+\epsilon}}d_{g(t)}(p,q)\end{split}\end{equation}
	so that $\frac{1}{\sqrt{1+\epsilon}}d_{g(t)}(p,q) \leq d_{g(t_0)}(p,q)$.  This immediately implies  \begin{equation}\begin{split}B_{g(t_0)}(p,\frac{r}{\sqrt{1+\epsilon}})\subset B_{g(t)}(p,r)\end{split}.\end{equation}
	The other inclusion is analogous.\end{proof}
\begin{lemma}[Hamilton \cite{hamilton82}]\label{metricequiv}Let $(M,g(t))$ be a one-parameter family of compact Riemannian manifolds defined for $t\in[0,T)$.  Suppose that\begin{align*}\int_0^T\max_{M_t}|\frac{\partial g}{\partial t}|_{g(t)}dt<\infty\end{align*}
	Then the metrics $g(t)$ are uniformly equivalent and converge pointwise as $t\rightarrow T$ to a continuous positive-definite metric $g(T)$.\end{lemma}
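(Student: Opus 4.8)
The plan is to run a Gronwall-type argument on the squared $g(t)$-length of a single fixed tangent vector. I would fix $p\in M$ and a nonzero $v\in T_pM$ and set $f(t)=g(t)(v,v)$. Then $f'(t)=\bigl(\tfrac{\partial g}{\partial t}\bigr)(v,v)$, and applying Cauchy--Schwarz to the symmetric $2$-tensor $\tfrac{\partial g}{\partial t}$ measured in the metric $g(t)$ (diagonalize $\tfrac{\partial g}{\partial t}$ with respect to $g(t)$) gives
\[
|f'(t)|\ \le\ \left|\tfrac{\partial g}{\partial t}\right|_{g(t)}\,|v|_{g(t)}^2\ =\ \left|\tfrac{\partial g}{\partial t}\right|_{g(t)} f(t)\ \le\ C(t)\,f(t),\qquad C(t):=\max_{M_t}\left|\tfrac{\partial g}{\partial t}\right|_{g(t)}.
\]
Since $f>0$ and $f$ is (at least) locally absolutely continuous in $t$, this says $\bigl|\tfrac{d}{dt}\log f(t)\bigr|\le C(t)$, so integrating over $[t_1,t_2]\subset[0,T)$ I would obtain $\bigl|\log f(t_2)-\log f(t_1)\bigr|\le\int_{t_1}^{t_2}C(t)\,dt$.

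Setting $\Lambda:=\int_0^TC(t)\,dt<\infty$ and taking $t_1=0$, this immediately yields $e^{-\Lambda}g(0)(v,v)\le g(t)(v,v)\le e^{\Lambda}g(0)(v,v)$ for all $t\in[0,T)$ and all $p,v$, which is exactly the claimed uniform equivalence $e^{-\Lambda}g(0)\le g(t)\le e^{\Lambda}g(0)$. For convergence I would invoke the Cauchy criterion: because $\int_0^TC<\infty$, the tails $\int_{t_1}^{t_2}C(t)\,dt\to 0$ as $t_1,t_2\to T$, so $\log f(t)$, hence $f(t)=g(t)(v,v)$, is Cauchy as $t\to T$ and converges, with limit $\ge e^{-\Lambda}g(0)(v,v)>0$. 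Polarizing in $v$ (equivalently, running the same estimate componentwise in a fixed chart) gives a pointwise limit $g(T)$ on each $T_pM$ which is positive-definite by the lower bound.

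It then remains to see that $g(T)$ is continuous as a tensor field on $M$. I would work in a fixed coordinate chart $U$ with $\overline U$ compact, where
\[
|\partial_t g_{ij}(p)|=\Bigl|\tfrac{\partial g}{\partial t}(\partial_i,\partial_j)\Bigr|_p\ \le\ \left|\tfrac{\partial g}{\partial t}\right|_{g(t)}|\partial_i|_{g(t)}|\partial_j|_{g(t)}\ \le\ C(t)\sqrt{g_{ii}(t)\,g_{jj}(t)}\ \le\ c_U\,C(t)
\]
for all $p\in U$, with $c_U$ depending only on $e^{\Lambda}$ and $\sup_{\overline U}g_{ii}(0)$ via the uniform equivalence just proved. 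Hence $|g_{ij}(t_2)(p)-g_{ij}(t_1)(p)|\le c_U\int_{t_1}^{t_2}C(t)\,dt$ uniformly in $p\in U$, so $g_{ij}(t)\to g_{ij}(T)$ uniformly on $U$; a uniform limit of the continuous functions $g_{ij}(t,\cdot)$ is continuous, and covering $M$ by finitely many such charts finishes the proof.

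I do not expect a genuine obstacle: the whole content is the observation that the $g(t)$-norm of $\tfrac{\partial g}{\partial t}$ controls its own $t$-derivative through a \emph{linear} (Gronwall) inequality, so the a priori $t$-dependent normalization is self-correcting and the finiteness of $\int_0^TC(t)\,dt$ does the rest. The only step requiring any care is upgrading the tensorial integrability hypothesis to a coordinate-wise uniform estimate, so that the limit metric is genuinely continuous rather than merely measurable; the uniform equivalence bound is precisely what makes that step routine.
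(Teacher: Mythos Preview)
Your argument is correct and is essentially the classical proof due to Hamilton: the Gronwall inequality for $f(t)=g(t)(v,v)$ gives uniform equivalence, the Cauchy criterion on $\log f$ gives pointwise convergence to a positive-definite limit, and the uniform-in-$p$ coordinate estimate upgrades pointwise convergence to uniform convergence on compacta, hence continuity of $g(T)$.

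Note, however, that the paper does not actually supply a proof of this lemma; it merely quotes the statement from \cite{hamilton82} and uses it as a black box. So there is no ``paper's own proof'' to compare against here. Your write-up matches the standard argument one finds in Hamilton's original paper and in the Ricci flow literature, and would serve perfectly well as the omitted proof.
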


\section{$\II$ and the Injectivity Radius}

We will prove Theorems \ref{main} and \ref{blowup} by a blow-up argument.  In particular we will use the Cheeger-Gromov convergence theorem to extract a limit of some submanifolds $F_j:M\hookrightarrow\reals^{m+n}$, thought of as Riemannian manifolds $(M,F_j^*dx^2)$.  We therefore need the following relationship between injectivity radius and the second fundamental form.

\begin{theorem}\label{injectivity}Let $F:M^m\looparrowright \reals^{m+n}$ be an immersion with $|\II|\leq C$.  Then $\inj(M,F^*dx^2)\geq \frac{1}{2\sqrt{2}C}$.\end{theorem}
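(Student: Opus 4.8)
The plan is to bound the injectivity radius from below by controlling the two standard obstructions: short geodesic loops and conjugate points. The key geometric input is that an immersed submanifold with $|\II|\leq C$ cannot "fold back on itself" too quickly, because its extrinsic geometry is controlled. First I would establish a local graphical estimate: given $p\in M$, in a neighborhood of $p$ the submanifold $F(M)$ is a graph over its tangent plane $T_pM$ with the graphing function having second derivatives controlled by $C$. Concretely, parametrize near $p$ by the exponential map, write $F(\exp_p(v)) = p + v + u(v)$ where $u$ takes values in the normal space $N_pM$; then $|D^2 u(0)| \leq |\II(p)| \leq C$, and since $|\II|\leq C$ everywhere one gets that $F$ restricted to the intrinsic ball $B(p, r_0)$ stays a graph over $T_pM$ with uniformly bounded slope provided $r_0 \lesssim 1/C$.

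The main step is then to rule out short geodesic loops. Suppose $\gamma$ is a geodesic loop at $p$ of intrinsic length $L < \frac{1}{\sqrt{2}C}$ (or whatever the sharp threshold turns out to be). Since $\gamma$ is a geodesic \emph{in $M$}, its acceleration as a curve in $\reals^{m+n}$ is purely normal and bounded: $|\ddot\gamma|_{\reals^{m+n}} = |\II(\dot\gamma,\dot\gamma)| \leq C$ (unit speed). Hence $\gamma$, viewed in Euclidean space, is a curve whose velocity direction turns slowly: $|\dot\gamma(s) - \dot\gamma(0)| \leq Cs \leq CL < 1$, so $\langle \dot\gamma(s),\dot\gamma(0)\rangle > 0$ throughout, which forces $\langle F(\gamma(s)) - F(p), \dot\gamma(0)\rangle$ to be strictly increasing — contradicting that $\gamma$ returns to $p$. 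Optimizing the constant (using $|\dot\gamma(s)-\dot\gamma(0)|^2 = 2 - 2\langle\dot\gamma(s),\dot\gamma(0)\rangle$ and that the halfway point must have $\langle\dot\gamma,\dot\gamma(0)\rangle \geq$ something) should yield exactly the threshold $\frac{1}{2\sqrt 2 C}$ for \emph{half} the loop, i.e. no loop of length $< \frac{1}{\sqrt 2 C}$, hence no loop of length $\leq \frac{1}{\sqrt2 C}$ with the injectivity radius being at least half that...

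For conjugate points, I would use the Gauss equation: the sectional curvatures of $(M, F^*dx^2)$ satisfy $|K| \leq c\, |\II|^2 \leq cC^2$ (in $\reals^{m+n}$ the ambient curvature vanishes, so $K(\sigma) = \langle \II(e_1,e_1),\II(e_2,e_2)\rangle - |\II(e_1,e_2)|^2$, bounded by $2C^2$). Rauch / Bonnet-type comparison then gives that the first conjugate point along any geodesic occurs no earlier than $\pi/\sqrt{2}C$, which is larger than $\frac{1}{2\sqrt2 C}$, so conjugate points are not the binding constraint. Combining, the Klingenberg-type lemma $\inj(p) \geq \min\{\,\text{conjugate distance},\ \tfrac12(\text{shortest geodesic loop length})\,\}$ gives $\inj \geq \frac{1}{2\sqrt 2 C}$.

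The main obstacle I anticipate is pinning down the sharp constant in the no-short-loops argument: the slope/monotonicity estimate must be run carefully, tracking where the factor $2\sqrt 2$ comes from (it should emerge from requiring the turning of $\dot\gamma$ over \emph{half} the loop to stay below $45^\circ$, i.e. $CL/2 \leq 1/\sqrt2$, giving loop length $L \geq \sqrt2/C$ and hence $\inj \geq L/2 \geq 1/(2\sqrt2 C)$ — wait, that overshoots, so the correct accounting is likely $|\dot\gamma(s)-\dot\gamma(0)| \leq Cs$ with the obstruction kicking in at $\langle \dot\gamma(s),\dot\gamma(0)\rangle = 0$ i.e. $C^2s^2 \geq 2$, giving injectivity radius $\geq s = \sqrt2/C$... so the stated $\frac{1}{2\sqrt2 C}$ is comfortably below the true bound, and the real content is just getting \emph{a} clean explicit constant rather than the optimal one). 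The routine verifications — the graph representation, the Gauss equation curvature bound, and the Klingenberg lemma — are all standard and I would cite or sketch them briefly.
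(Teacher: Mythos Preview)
Your approach is correct but genuinely different from the paper's. The paper does not invoke Klingenberg's lemma or separately analyse conjugate points and geodesic loops. Instead it proves two preparatory lemmas: first, that for a graph $\psi:D_r\to\reals^n$ one has $|D^2\psi|^2\leq(1+|D\psi|^2)^3|\II|_g^2$; second (following Langer), that any immersion with $|\II|\leq C$ is an $(r,\alpha)$-immersion whenever $r\leq \alpha(1+\alpha^2)^{-3/2}C^{-1}$, meaning that about every point the image is the graph over the tangent plane of a function with slope at most $\alpha$ on a disc of radius $r$. Taking $\alpha=1$ gives $r=\frac{1}{2\sqrt{2}C}$, and since projection to the tangent plane is length-nonincreasing, the intrinsic ball $B(q,r)$ sits inside this graphical patch; the paper then reads off $\inj(q)\geq r$ from this graphical description.

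Your route via Klingenberg is arguably cleaner and, as you eventually notice, yields the sharper bound $\inj\geq\frac{1}{\sqrt{2}C}$: the loop argument (monotonicity of $\langle F\circ\gamma-F(p),\dot\gamma(0)\rangle$ while $|\dot\gamma(s)-\dot\gamma(0)|<\sqrt{2}$) and the Gauss-equation curvature bound are both short and self-contained. Two remarks. First, Klingenberg's lemma needs completeness of $(M,F^*dx^2)$; the theorem as stated does not assume this, though in the paper's application $M$ is compact so it is harmless---just flag the hypothesis. Second, your write-up wanders on the constants (the thresholds $1$, $\sqrt{2}$, and $1/\sqrt{2}$ get conflated mid-paragraph); the clean accounting is: $|\dot\gamma(s)-\dot\gamma(0)|\leq Cs$, so $\langle\dot\gamma(s),\dot\gamma(0)\rangle>0$ whenever $Cs<\sqrt{2}$, hence no geodesic loop of length $<\sqrt{2}/C$, hence $\inj\geq\min\{\pi/C,\ \tfrac{1}{2}\cdot\sqrt{2}/C\}=\tfrac{1}{\sqrt{2}C}$, which dominates the stated $\tfrac{1}{2\sqrt{2}C}$. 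What the paper's graphical argument buys, by contrast, is an explicit chart of definite size about every point---this is more than a bare injectivity estimate and is the sort of uniform local parametrisation one often wants in compactness arguments.
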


We begin by considering the case of the graph of a map $\psi:\reals^m\rightarrow\reals^n$, as in \cite{reilly73}.  We need to compare the standard square-norm of certain objects, e.g. $|D^2\psi|^2=\sum_{\substack{1\leq \alpha\leq n\\1\leq i,j\leq m}}\left(\frac{\partial^2\psi_\alpha}{\partial x_i\partial x_j}\right)^2$, with the norms of the tensors $\II$ and $\nabla\II$ in the metric $g$ induced by the immersion. To keep the norms straight, in this section we use $|\cdot|$ for the standard square-norm and $|\cdot|_g$ for the norm in $g$:\begin{equation}\begin{split}
	|\II|_g^2=&h_{ij\alpha}h_{kl\beta}g^{\alpha\beta}g^{ik}g^{jl}\\
	|\nabla\II|^2_g=&\nabla_ih_{jk\alpha}\nabla_ph_{qr\beta}g^{ip}g^{jq}g^{kr}g^{\alpha\beta}\end{split}\end{equation}

\begin{lemma}\label{hessianbound}Let $\psi:D_r^m\rightarrow \reals^n$ be a $C^2$ function on the disc of radius $r$.  Then\begin{align*}
	|D^2\psi|^2\leq (1+|D\psi|^2)^3|\II|_g^2\end{align*}where $\II$ is the second fundamental form of the graph of $\psi$.\end{lemma}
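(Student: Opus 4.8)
The plan is to work in the explicit parametrization $F(x)=(x,\psi(x))$ of the graph and compare everything to Euclidean quantities. Write $V_{ij}:=\II(\partial_i,\partial_j)\in\reals^{m+n}$ for the vector-valued second fundamental form. First I would record that the induced metric is $g_{ij}=\delta_{ij}+\partial_i\psi\cdot\partial_j\psi$, i.e.\ $g=I+D\psi^{T}D\psi$; since $D\psi^{T}D\psi$ is positive semi-definite with trace $|D\psi|^2$, every eigenvalue of $g$ lies in $[1,\,1+|D\psi|^2]$, so $\lambda_{\min}(g^{-1})\ge(1+|D\psi|^2)^{-1}$. Since $\partial_i\partial_jF=(0,\partial_i\partial_j\psi)$ has vanishing horizontal part, $|D^2\psi|^2=\sum_{i,j}|\partial_i\partial_j\psi|^2=\sum_{i,j}|\partial_i\partial_jF|^2$, and by the Gauss formula $V_{ij}$ is the orthogonal projection of $\partial_i\partial_jF=(0,\partial_i\partial_j\psi)$ onto the normal space $N$ of the graph. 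The target inequality is then reached by losing one power of $1+|D\psi|^2$ in passing from $\sum_{i,j}|\partial_i\partial_j\psi|^2$ to $\sum_{i,j}|V_{ij}|^2$, and two more in passing from $\sum_{i,j}|V_{ij}|^2$ to $|\II|_g^2$.

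The second of these is a norm comparison: the $4$-tensor $\langle V_{ij},V_{kl}\rangle_{\reals^{m+n}}$ is positive semi-definite (a Gram tensor: $\sum_{i,j,k,l}\xi^{ij}\xi^{kl}\langle V_{ij},V_{kl}\rangle=|\sum_{i,j}\xi^{ij}V_{ij}|^2\ge0$), so contracting it twice against $g^{-1}\succeq(1+|D\psi|^2)^{-1}I$ gives
\[
|\II|_g^2=g^{ik}g^{jl}\langle V_{ij},V_{kl}\rangle\ \ge\ \lambda_{\min}(g^{-1})^2\sum_{i,j}|V_{ij}|^2\ \ge\ \frac{1}{(1+|D\psi|^2)^2}\sum_{i,j}|V_{ij}|^2 .
\]
For the remaining power, using that $T_pM=\Span\{F_k:=\partial_kF=(e_k,\partial_k\psi)\}$ I would compute, for any vertical vector $(0,w)$ with $w\in\reals^{n}$,
\[
|\proj_N(0,w)|^2=|w|^2-g^{kl}\langle w,\partial_k\psi\rangle\langle w,\partial_l\psi\rangle=|w|^2-\big\langle D\psi^{T}w,\ (I+D\psi^{T}D\psi)^{-1}D\psi^{T}w\big\rangle .
\]
The elementary identity $(I+D\psi^{T}D\psi)^{-1}D\psi^{T}=D\psi^{T}(I+D\psi D\psi^{T})^{-1}$ collapses the right-hand side to $\langle w,\,(I+D\psi D\psi^{T})^{-1}w\rangle$; since $D\psi D\psi^{T}$ is $n\times n$, positive semi-definite, with trace $|D\psi|^2$, its top eigenvalue is at most $|D\psi|^2$, whence $|\proj_N(0,w)|^2\ge(1+|D\psi|^2)^{-1}|w|^2$. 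Applying this with $w=\partial_i\partial_j\psi$ and summing over $i,j$ gives $\sum_{i,j}|V_{ij}|^2\ge(1+|D\psi|^2)^{-1}|D^2\psi|^2$, and combining with the display yields $|\II|_g^2\ge(1+|D\psi|^2)^{-3}|D^2\psi|^2$, which is the claim.

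The one step that requires care — and the point I expect to be the main obstacle — is getting the \emph{sharp} constant $(1+|D\psi|^2)^{-1}$ in the projection estimate: the crude bounds $\langle D\psi^{T}w,g^{-1}D\psi^{T}w\rangle\le|D\psi|^2|w|^2$, or even $\le\|D\psi\|_{\mathrm{op}}^2|w|^2$, are too weak (the first even makes the right-hand side negative), so one has to notice that the elementary identity collapses the quadratic form all the way down to $(I+D\psi D\psi^{T})^{-1}$, whose operator norm is then controlled directly by the trace. That the exponent $3$ is optimal is confirmed by the one-dimensional model $\psi(x)=ax+\tfrac12cx^2$ at $x=0$, where both sides of the asserted inequality coincide. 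The remaining ingredients — the two norm comparisons and the computation of $|\proj_N(0,w)|^2$ — are routine.
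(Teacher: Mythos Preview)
Your argument is correct. The route differs from the paper's in how the normal direction is handled: the paper introduces an explicit (non-orthonormal) normal frame $\nu_\alpha=(-D\psi_\alpha,0,\ldots,1,\ldots,0)$, in which the components come out exactly as $h_{ij\alpha}=\partial_i\partial_j\psi_\alpha$; the normal metric is then $g_{\alpha\beta}=\delta_{\alpha\beta}+D\psi_\alpha\!\cdot\! D\psi_\beta=I+D\psi\,D\psi^{T}$, and the three powers of $(1+|D\psi|^2)$ arise symmetrically from eigenvalue bounds on $g^{ik}$, $g^{jl}$, and $g^{\alpha\beta}$, with no projection computation at all. You instead work with the vector-valued $\II$ and compute the orthogonal projection of vertical vectors directly, arriving via the matrix identity at $|\proj_N(0,w)|^2=\langle w,(I+D\psi\,D\psi^{T})^{-1}w\rangle$ --- which is precisely the paper's $g^{\alpha\beta}$ appearing in disguise. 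So the two arguments are the same computation read in two coordinate systems; the paper's framing is a bit shorter and more symmetric, while yours makes the geometric source of the third factor (loss under normal projection) more transparent and, as you note, checks sharpness against the one-dimensional model.
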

\begin{proof}
	The graph of $\psi$ has immersion map $F(x_1,\ldots,x_m)=(x_1,\ldots,x_m,\psi_1,\ldots,\psi_n)$. We use the following tangent and normal frames, where $1\leq i\leq m$ and $1\leq \alpha\leq n$:\begin{equation}\begin{split}
		e_i=&(0,\ldots,0,1,0,\ldots,0,\frac{\partial\psi_1}{\partial x_i},\ldots,\frac{\partial\psi_n}{\partial x_i})=(0,\ldots,0,1,0,\ldots,0,D_i\psi)\\
		\nu_\alpha=&(-\frac{\partial\psi_\alpha}{\partial x_1},\ldots,-\frac{\partial \psi_\alpha}{\partial x_m},0,\ldots,0,1,0,\ldots,0)=(-D\psi_\alpha,0,\ldots,0,1,0,\ldots,0)\end{split}\end{equation}
	These choices induce the metric on the tangent bundle of the graph, which we denote by $g$ with Latin indices:
		\begin{equation}
		g_{ij}=e_i\cdot e_j=\delta_{ij}+D_i\psi\cdot D_j\psi\end{equation}
	We also get a metric on the normal bundle, which we denote by $g$ with Greek indices:\begin{equation}
		g_{\alpha\beta}=\nu_\alpha\cdot\nu_\beta=\delta_{\alpha\beta}+D\psi_\alpha\cdot D\psi_\beta\end{equation}
	We will use $g^{ij}$ to denote the inverse matrix to $g_{ij}$ and $g^{\alpha\beta}$ to denote the inverse to $g_{\alpha\beta}$.
	We compute the second fundamental form. Note that $D^2F=(0,D^2\psi)$. So we have\begin{equation}\begin{split}
		\II(e_i,e_j)=&\proj^\perp(D^2F(e_i,e_j))\\
		=&(D_{ij}^2F\cdot\nu_\beta)g^{\alpha\beta}\nu_\alpha\\
		=&\frac{\partial^2\psi_\beta}{\partial x_i\partial x_j}g^{\alpha\beta}\nu_\beta\end{split}\end{equation}
	In components, $h_{ij\alpha}=\frac{\partial^2\psi_\alpha}{\partial x_i\partial x_j}$.

	Then the norm-squared of the second fundamental form is\begin{equation}\begin{split}
		|\II|_g^2=&\frac{\partial^2\psi_\alpha}{\partial x_i\partial x_j}\frac{\partial^2 \psi_\beta}{\partial x_k\partial x_l}g^{\alpha\beta}g^{ik}g^{jl}\end{split}.\end{equation}
	We can think of $|\II|_g^2$ as the norm-squared of $D^2\psi$ in the metric $g$ as opposed to the standard metric.  We will compare $g^{\alpha\beta}$ and $g^{ij}$ to the standard metric by giving estimates for the eigenvalues of $g^{\alpha\beta}$ and $g^{ij}$. To do this we estimate the eigenvalues of $g_{ij}$ and $g_{\alpha\beta}$.

	Each eigenvalue  $\lambda$ of $g_{\alpha\beta}$ has the form $\lambda=g(X,X)=g_{\alpha\beta}X^\alpha X^\beta$ for some eigenvector $X\in\reals^n$ with $|X|^2=\sum_{\alpha}(X^\alpha)^2=1$.  Then\begin{equation}\begin{split}
		\lambda=&(\delta_{\alpha\beta}+D\psi_\alpha\cdot D\psi_\beta)(X^\alpha X^\beta)\\
			=&|X|^2 + (X^\alpha D\psi_\alpha)\cdot (X^\beta D\psi_\beta)\\
			=&1 + |X^\alpha D\psi_\alpha|^2\end{split}\end{equation}
	Thus $1\leq \lambda\leq 1+|D\psi|^2$. Similarly for an eigenvalue $\mu$ of $g_{ij}$, we have\begin{equation}\begin{split}
		\mu=&(\delta_{ij} + D_i\psi\cdot D_j\psi)X^iX^j\\
		=&|X|^2 + |D_X\psi|^2\\
		=&1+|D_X\psi|^2\end{split}\end{equation}
	So $1\leq \mu\leq 1+|D\psi|^2$.
		
	Thus the eigenvalues of the inverse matrices $g^{\alpha\beta}$ and $g^{ij}$ are bounded away from zero and infinity:\begin{equation}\label{inveigen}\begin{split}
		1 \geq |\lambda^{-1}|\geq& \frac{1}{1+|D\psi|^2}\\
		1 \geq |\mu^{-1}|\geq& \frac{1}{1+|D\psi|^2}\end{split}\end{equation}
	So we can estimate\begin{equation}\begin{split}
		|\II|_g^2=&\frac{\partial^2\psi_\alpha}{\partial x_i\partial x_j}\frac{\partial^2 \psi_\beta}{\partial x_k\partial x_l}g^{\alpha\beta}g^{ik}g^{jl}\\
		\geq & \sum_{\substack{1\leq\alpha\leq n\\1\leq i,j\leq m}}\left(\frac{\partial^2\psi_\alpha}{\partial x_i\partial x_j}\right)^2\frac{1}{(1+|D\psi|^2)(1+|D\psi|^2)^2}\\
		= & |D^2\psi|^2\frac{1}{(1+|D\psi|^2)^3}\end{split}\end{equation}
	which establishes our lemma.
		
\end{proof}

Our next step is to show that any immersed submanifold $F:M\looparrowright\reals^{m+n}$ can be written as a collection of graphs of functions $\psi$ with small $|D\psi|$.

We introduce the following notation and notions, following \cite{langer85}.  Given $q\in M$, denote by $A_q$ any Euclidean isometry which takes $F(q)$ to the origin and $T_{F(q)}F(M)$ to the plane $\{(x_1,\ldots,x_m,0)\}$.  Let $\pi$ be the projection of $\reals^{n+m}$ to the plane $\{(x_1,\ldots,x_m,0)\}$.  Define $U_{r,q}\subset M$ to be the component of $(\pi\circ A_q^{-1}\circ F)^{-1}(D_r)$ which contains $q$.  We call $F:M\looparrowright \reals^{n+m}$ a $(r,\alpha)$-immersion if for each $q\in M$ there is some $\psi_q:D_r^m\rightarrow\reals^n$ with $|D\psi_q|\leq \alpha$ so that $A_q^{-1}\circ F(U_{r,q})=\graph(\psi_q)$.

\begin{lemma}\label{arralpha} Let $0<\alpha\leq 1$. Then for any $C^2$-immersed submanifold $F:M^m\looparrowright \reals^{n+m}$ and any $r$ satisfying\begin{align*}
	r\leq \frac{\alpha}{(1+\alpha^2)^{3/2}}\frac{1}{\sup_M|\II|_g}\end{align*}
	$F$ is a $(r,\alpha)$-immersion.\end{lemma}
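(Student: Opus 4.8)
The plan is to fix $q\in M$ and produce the function $\psi_q$ by implicitly inverting the projection $\pi\circ A_q^{-1}\circ F$ restricted to $U_{r,q}$. After applying the isometry $A_q$ I may as well assume $F(q)=0$ and $T_{F(q)}F(M)=\{x_{m+1}=\cdots=x_{m+n}=0\}$, so that $D(\pi\circ F)$ is an isomorphism at $q$. The first step is purely local: by the inverse function theorem there is a neighbourhood $V$ of $q$ on which $\pi\circ F$ is a diffeomorphism onto an open subset of $D_r^m$, and on that neighbourhood $F$ is the graph of some $\psi$ with $D\psi(0)=0$. The real content is to show the graph representation persists all the way out to the full disc $D_r^m$ — that is, that $\pi\circ A_q^{-1}\circ F$ restricted to $U_{r,q}$ is actually a diffeomorphism onto $D_r^m$ — and that the resulting $\psi$ satisfies $|D\psi|\le\alpha$ throughout, under the stated bound on $r$.

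The key estimate controlling $|D\psi|$ is the second fundamental form bound together with Lemma \ref{hessianbound}. I would argue along a curve: parametrize $U_{r,q}$ by $x\in D_r^m$ via the (as yet only locally defined) inverse of $\pi\circ F$, and estimate how fast $|D\psi(x)|$ can grow as $x$ moves away from the origin. Differentiating $F(x)=(x,\psi(x))$ and using that $|D^2\psi|^2\le(1+|D\psi|^2)^3|\II|_g^2\le(1+|D\psi|^2)^3\,(\sup_M|\II|_g)^2$, one gets a differential inequality of Grönwall type for the quantity $w(s)=|D\psi(\gamma(s))|$ along a unit-speed (in the Euclidean base) ray $\gamma$ from $0$. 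Roughly, $|\tfrac{d}{ds}w|\le |D^2\psi|\le(1+w^2)^{3/2}\sup|\II|$, so separating variables, as long as $w<\alpha\le 1$ we have $\tfrac{d}{ds}\,\theta(w)\le \sup|\II|$ for an antiderivative $\theta$ with $\theta(w)=\int_0^w(1+\tau^2)^{-3/2}d\tau\le w$ and $\theta$ increasing; since $w(0)=0$, after travelling Euclidean distance $\le r$ we get $w(r)\le\theta^{-1}(r\sup|\II|)$, and the hypothesis $r\le \tfrac{\alpha}{(1+\alpha^2)^{3/2}}\tfrac{1}{\sup|\II|}$ is exactly what forces $w<\alpha$ on the whole disc (using $\theta(\alpha)\ge \alpha(1+\alpha^2)^{-3/2}$, which bounds the integrand below by its value at $\tau=\alpha$). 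This simultaneously prevents $D\psi$ from blowing up, so the graph cannot "turn vertical," and a standard continuity/open-closed argument then shows $\pi\circ F$ stays a local diffeomorphism and $U_{r,q}$ maps onto all of $D_r^m$; hence $F$ is an $(r,\alpha)$-immersion.

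I expect the main obstacle to be the global topological bookkeeping rather than the derivative estimate: one must verify that $U_{r,q}$, defined as the connected component of the preimage of $D_r^m$ containing $q$, is genuinely a graph over the \emph{entire} disc and that nothing in $M$ folds back over it — i.e. that the locally defined $\psi$ extends without running into a boundary of its domain of definition or a point where $D(\pi\circ F)$ degenerates. The $|D\psi|\le\alpha<\infty$ bound is what rules out degeneration of $D(\pi\circ F)$ (since in the graph frame $D(\pi\circ F)=\Id$ on the base and the tangent spaces stay uniformly transverse to the fibre direction), so the continuity argument closes up; handling this carefully, and noting the estimate only used $|\II|_g\le\sup_M|\II|_g$ pointwise via Lemma \ref{hessianbound}, completes the proof.
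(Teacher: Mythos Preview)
Your proposal is correct and follows essentially the same strategy as the paper: both use Lemma~\ref{hessianbound} together with $D\psi(0)=0$ and a continuation argument to show that $|D\psi|\le\alpha$ persists out to the stated radius, which keeps the projection nondegenerate and lets the graph representation extend. The only cosmetic difference is that the paper, rather than integrating the full Gr\"onwall inequality $w'\le(1+w^2)^{3/2}\sup|\II|$, simply lets $r_q$ be the first radius at which $\sup|D\psi|=\alpha$, freezes the Hessian bound at $(1+\alpha^2)^{3/2}\sup|\II|$ on all of $D_{r_q}$, and applies the fundamental theorem of calculus once to obtain $\alpha\le r_q(1+\alpha^2)^{3/2}\sup|\II|$ directly---exactly the crude lower bound $\theta(\alpha)\ge\alpha(1+\alpha^2)^{-3/2}$ you invoke at the end.
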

\begin{proof}
	Let $q\in M$ be arbitrary.  Every submanifold is locally a graph over its tangent plane; thus $A_q(F(U_{r,q}))$ can be written as a graph over $D_r$ for small enough $r$.  So we set $S_q=\sup\{r|F(U_{r,q})=\graph(\psi_{r,q})\}$.  For any large $K$, if $F(U_{r,q})=\text{graph}(\psi_{r,q})$ and $|D\psi_{r,q}|\leq \frac{K}{2}$, we can extend $\psi_{r,q}$ to have a larger domain and still $|D\psi|\leq K$.  Thus we have\begin{equation}\begin{split}
		\lim_{r\rightarrow S_q}\inf_{\psi}\sup_{D_r}|D\psi|=\infty\end{split}\end{equation}
	where the infimum is taken over all $\psi$ of which $F(U_{r,q})$ is a graph.
	Thus for our given $\alpha$ there exists some $r_q, \psi_{q}:D_{r_q}\rightarrow \reals^n$ with $\sup_{D_{r_q}}|D\psi_{q}|=\alpha$.  Now we use the fundamental theorem of calculus and Lemma \ref{hessianbound} to get\begin{equation}\begin{split}
		\alpha=\sup_{D_{r_q}}|D\psi_{q}|\leq r\sup_{D_{r_q}}|D^2\psi_{q}|\leq r_q (1+\alpha^2)^{3/2}\sup_{D_{r_q}}|\II_{\psi_q}|_g\end{split}\end{equation}
	which implies that \begin{equation}\label{Sq}
		r_q\geq \frac{\alpha}{(1+\alpha^2)^{3/2}}\frac{1}{\sup_{D_{r_q}}|\II_{\psi_q}|_g}\geq\frac{\alpha}{(1+\alpha^2)^{3/2}}\frac{1}{\sup_M|\II|_g}.\end{equation}
	So for $r$ less than the right-hand side of (\ref{Sq}), there is some $\psi:D_r\rightarrow\reals^{m+n}$ which makes $F(U_{r,q})$ a graph and $|D\psi|\leq \alpha$.
	\end{proof}
	
\begin{proof}[Proof of Theorem \ref{injectivity}]
	Now choose $\alpha=1$, and let $r$ be given by Lemma \ref{arralpha}.  It is clear that $B(q,r)\subset U_{r,q}$, since $A_q F(U_{r,q})$ is a graph over a disc of radius $r$.  Thus $B(q,r)$ can be written as a graph over the tangent plane, and in particular $\inj(q)\geq r$.
	
	Since $q$ was arbitrary, we have $\inj(M)\geq r>0$.
\end{proof}

\section{The Tensor $A$ Blows Up}
We will prove Theorem \ref{main} by contradiction.  To this end, assume $\displaystyle{\max_{M}|A(t)|\leq C}$ for all $t\in[0,T)$, and that the flow has a singularity at $T<\infty$.

In particular, we have $|H|^4=(\tr A)^2\leq n|A|^2\leq nC^2$.  So $|H|$ is also bounded along the flow.  We will use $C$ to denote this bound as well.

By Theorem \ref{huisken}, we know that as $t\rightarrow T$, $\displaystyle{\max_{M}|\II(t)|}\rightarrow \infty$.  Let $(p_j,t_j)$ be a sequence in $M\times [0,T)$ with $t_j\rightarrow T$  and $\displaystyle{\max_{t\leq t_j}|\II|=|\II(p_j,t_j)|=:Q_j\rightarrow \infty}$.  

Since $|\partial_tF|=|H|\leq C$, we know that $F_t(M)$ is contained in the $CT$ tubular neighborhood of $F_0(M)$. Thus the $F(p_j,t_j)$ accumulate.  Passing to a subsequence, we have $F(p_j,t_j)\rightarrow p_0$. For any $R>0$, we may choose a $j_0$ so that $F(p_j,t_j)$ lies in the ambient ball of radius $R$ about $p_0$, $B^N(R)$ for all $j\geq j_0$.  In particular, we will take $R$ to be less than the injectivity radius of $(N,h)$.
  
Consider the flows given by scaling the ambient metric by $Q_j$ and time by $Q_j^{-2}$:\begin{equation}\begin{split}
	F_j(p,t)=F(p,t_j+\frac{t}{Q_j^2}):M\hookrightarrow (B^N,Q_j^2h)\end{split}\end{equation}
\begin{lemma}\label{bound} Each $F_j$ is  a mean curvature flow  on $M\times[-Q_j^2t_j,0]$.  The second fundamental form of $F_j$ is bounded:\begin{equation}\begin{split}\max_{t\leq 0}|\II_j|=|\II_j(p_j,0)|=1\end{split}\end{equation}\end{lemma}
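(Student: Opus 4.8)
The plan is a direct verification using the scaling behaviour of mean curvature flow; there is no genuine analysis here, only bookkeeping of scaling weights.

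First I would record how the relevant quantities transform when the ambient metric $h$ is replaced by $Q_j^2 h$. The Levi-Civita connection $\overline\nabla$ of the ambient metric is scale-invariant, and the orthogonal splitting $TN=TM\oplus NM$ along $F$ is unchanged by a homothety; hence the normal vector field $\II(\partial_i,\partial_j)=(\overline\nabla_{\partial_i}\partial_j)^\perp$ is unchanged, while the induced metric becomes $Q_j^2 g$, so $g^{ij}$ is multiplied by $Q_j^{-2}$. Consequently the mean curvature vector $H=g^{ij}\II(\partial_i,\partial_j)$ picks up a factor $Q_j^{-2}$, and the squared norm $|\II|^2=g^{ik}g^{jl}\langle\II(\partial_i,\partial_j),\II(\partial_k,\partial_l)\rangle$ (the inner product being the ambient one) picks up a factor $Q_j^{-2}\cdot Q_j^{-2}\cdot Q_j^2=Q_j^{-2}$; that is, $|\II|_{Q_j^2 h}=Q_j^{-1}|\II|_h$. (The same computation gives that $|H|$ scales by $Q_j^{-1}$ and $|A|$ by $Q_j^{-2}$, though only $\II$ is needed here.)

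Next, for the flow equation, I would simply differentiate $F_j(p,t)=F(p,t_j+tQ_j^{-2})$ in $t$: one gets $\partial_t F_j=Q_j^{-2}(\partial_t F)(p,t_j+tQ_j^{-2})=Q_j^{-2}\,H[F(\cdot,t_j+tQ_j^{-2}),h]$, which by the scaling of $H$ recorded above equals $H[F_j(\cdot,t),Q_j^2 h]$. Hence $F_j$ solves mean curvature flow into $(B^N,Q_j^2 h)$. Since the original flow is defined for $s\in[0,T)$ and the substitution is $s=t_j+tQ_j^{-2}$, the rescaled time $t$ ranges over $[-Q_j^2 t_j,Q_j^2(T-t_j))$; restricting to $t\le 0$, i.e. $s\le t_j$, leaves a flow on $M\times[-Q_j^2 t_j,0]$, as claimed.

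Finally, for the curvature bound: at a point $(p,t)$ with $t\le 0$ the scaling of $|\II|$ gives $|\II_j(p,t)|=Q_j^{-1}|\II(p,t_j+tQ_j^{-2})|$, and $t_j+tQ_j^{-2}\le t_j$. By the choice of the sequence $(p_j,t_j)$ we have $|\II(\cdot,s)|\le |\II(p_j,t_j)|=Q_j$ for every $s\le t_j$, so $|\II_j(p,t)|\le 1$ on all of $M\times[-Q_j^2 t_j,0]$, with equality at $(p_j,0)$ because $|\II_j(p_j,0)|=Q_j^{-1}|\II(p_j,t_j)|=1$. This establishes the lemma. The only point that needs care — the ``main obstacle,'' such as it is — is getting the scaling weights of $H$ and of $|\II|$ correct and checking they are mutually consistent, so that the flow equation is genuinely preserved under the rescaling; everything else is immediate.
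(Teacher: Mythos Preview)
Your proof is correct and follows essentially the same approach as the paper: both verify the scaling behaviour of $H$ and $|\II|$ under the homothety $h\mapsto Q_j^2 h$ (using that the connection and the tangent/normal splitting are scale-invariant) and then read off the flow equation and the curvature bound. Your write-up is in fact slightly more thorough, as you make the time-interval bookkeeping explicit.
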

\begin{proof} Clearly $\partial_tF_j=Q_j^{-2}\partial_t F$.  We need to show that by scaling the ambient metric, we induce the same scaling in $H$.  By definition\begin{equation}\begin{split}
	H_{Q_j^2h}(F_j)&=\tr_{Q_j^2h}\II_{Q_j^2h}(F_j)\\
	&=(Q_j^2h)^{pq} (\partial_p\partial_qF_j)^{\perp_{Q_j^2h}}\end{split}\end{equation}
	where $(Q_j^2h)^{pq}$ is the inverse matrix of $(Q_j^2h)(\partial_pF_j,\partial_qF_j)$ and $\perp_{Q_j^2h}$ is the projection onto the normal bundle induced from $Q_j^2h$.
	
	$Q_j^2h$ induces the same splitting into tangent and normal bundles as $h$, so we have
	\begin{equation}\begin{split}
	H_{Q_j^2h}(F_j)&=Q_j^{-2}\left[h^{pq} (\partial_p\partial_qF_j)^{\perp_{h}}\right]\\
	&=Q_j^{-2}\tr_h\II_{h}(F_j)\\
	&=Q_j^{-2}H_h(F_j)\end{split}\end{equation}
	So $H$ scales as required.
	
	Similarly scaling the ambient metric by $Q_j^2$ scales $|\II|$ by $Q_j^{-1}$, so we have \begin{equation}\begin{split}
	\max_{t\leq 0}|\II_j|&=\max_{t\leq t_j}Q_j^{-1}|\II|\\
	&= Q_j^{-1}|\II(p_j,t_j)|=1\end{split}\end{equation}\end{proof}
	
It is clear that the $(B^N,Q_j^2h,p_0)$ converge in the Cheeger-Gromov sense to $(\reals^{m+n},dx^2,0)$, where $dx^2$ is the Euclidean metric.  In particular, we have a monotone exhausting sequence of open sets $V_j\subset \reals^{m+n}$ and embeddings \linebreak $\displaystyle{\psi_j:(V_j,0)\rightarrow (B^N,p_0)}$, such that $\psi_{j}^*(Q_j^2h)\rightarrow dx^2$.  

Let $s_0=-Q_1^2t_1$.  After passing to a smaller spatial region $\tilde{M}\subset M$, we can assume $F_j(\tilde{M}\times [s_0,0])\subset \psi_j(V_j)$.   We restrict our argument to this smaller region and write $M$ without confusion.

Define $\tilde{F}_j:M\times [s_0,0]\rightarrow \reals^{m+n}$ by $\tilde{F_j}=\psi_j^{-1}F_j$.  Each $\tilde{F}_j$ is a mean curvature flow with respect to the metric $\psi_j^*(Q_j^2h)$.

The second fundamental forms $\tilde{\II}_j$ of the $\tilde{F}_j$ are uniformly bounded, so Theorem \ref{bounds} gives uniform bounds on the covariant derivatives of the $\tilde{\II}_j$.  

Since $\partial_t \tilde{F}_j=\tilde{H}_j$, we get bounds on the time derivative of $\tilde{F}_j$.  In fact the evolution of $H$ gives a  bound \begin{equation}\begin{split}
	|\partial_t^2\tilde{F}_j|=|\partial_t\tilde{H}_j|&\leq |\Delta \tilde{H}_j| + C_1|\nabla \tilde{H}_j|+C_2|\tilde{H}_j||\tilde{\II}_j|^2\\
	&\leq |\nabla^2\tilde{\II}_j|+C_1|\nabla\tilde{\II}_j|+C_2|\tilde{\II}_j|^3\end{split}\end{equation}
Similarly, any iterated time derivative $\partial_t^s\tilde{F}_j=\partial_t^{s-1}(\tilde{H}_j)$  is controlled in terms of $|\nabla^r\tilde{\II}_j|$ for $r\leq 2(s-1)$.  The mixed derivatives $\partial_t^r\nabla^s\tilde{\II}_j$ are similarly controlled by $|\nabla^l\tilde{\II}_j|$ for $l\leq 2r+s$.

Since $Q_j^2h\rightarrow dx^2$ in $C^k$ for any $k$, our bounds on the $|\nabla^s\tilde{\II}_j|$ give bounds on $|\overline{\nabla}^s\overline{\II}_j|$, where $\overline{\nabla}$ and $\overline{\II}_j$ are the connection and second fundamental form of $\tilde{F}_j$ with respect to the metric $dx^2$.

Let $t\in[s_0,0]$.  Theorem \ref{injectivity} gives $\inj(M,\tilde{F}_j(t)^*dx^2)\geq \frac{1}{2\sqrt{2}}$.  The Gauss equation guarantees that the Riemannian curvature at its covariant derivatives of $(M,\tilde{F}_j(t)^*dx^2,p_j)$ are all bounded uniformly in $j$.  Thus by Cheeger-Gromov there is a limit Riemannian manifold $(M_\infty, g_\infty(t),p_\infty)$.

Adapting the ideas of \cite{sesum03}, we consider the growth of balls in $(M_\infty, g_\infty(0))$.  We will write $g_\infty$ for $g_\infty(0)$.

\begin{proposition}\label{intrinsic}
	$(M, g_\infty)$ has euclidean volume growth about $p_\infty$.\end{proposition}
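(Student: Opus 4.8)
The plan is to extract the force of the hypothesis $\max_M|A(t)|\le C$ not from the immersion but from the \emph{intrinsic} metric on $M$. Since $\partial_t g_{ij}=-2H^\alpha h_{ij\alpha}=-2A_{ij}$, one has $|\partial_t g|_{g(t)}=2|A|_{g(t)}\le 2C$, so $\int_0^T\max_{M_t}|\partial_t g|_{g(t)}\,dt\le 2CT<\infty$; by Lemma~\ref{metricequiv} the metrics $g(t)$ on the fixed compact manifold $M$ are uniformly equivalent on $[0,T)$ and converge to a continuous positive-definite metric $g(T)$. Fix $\Lambda\ge1$ with $\Lambda^{-1}g(T)\le g(t)\le\Lambda g(T)$ for all $t$. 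For the fixed compact Riemannian manifold $(M,g(T))$ there is a constant $c_0$ with $\vol_{g(T)}\bigl(B_{g(T)}(x,\rho)\bigr)\le c_0\rho^m$ for all $x\in M$ and all $\rho>0$---for small $\rho$ by covering $M$ with finitely many charts in which $g(T)$ is comparable to the Euclidean metric, and for large $\rho$ because the left side is at most $\vol(M,g(T))$. Comparing distances and volume forms through $\Lambda^{-1}g(T)\le g(t)\le\Lambda g(T)$ upgrades this to
\begin{equation*}
\vol_{g(t)}\bigl(B_{g(t)}(x,\rho)\bigr)\le c_1\rho^m\qquad\text{for all }x\in M,\ \rho>0,\ t\in[0,T),
\end{equation*}
with $c_1=\Lambda^m c_0$.

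Next I would carry this bound to the rescaled flows and then to the blow-up limit. Write $\tilde g_j=\tilde F_j(0)^*dx^2$; the metric induced on $M$ by $F(\cdot,t_j)\colon M\to(B^N,Q_j^2h)$ is $Q_j^2g(t_j)$, and $Q_j^2g(t_j)=\tilde F_j(0)^*\bigl(\psi_j^*(Q_j^2h)\bigr)$, so $\tilde g_j$ and $Q_j^2g(t_j)$ are the pullbacks under $\tilde F_j(0)$ of $dx^2$ and of $\psi_j^*(Q_j^2h)$. Since $\psi_j^*(Q_j^2h)\to dx^2$ in $C^\infty_{loc}$ and, for fixed $r$, the set $\tilde F_j(0)\bigl(B_{\tilde g_j}(p_j,r)\bigr)$ lies in a fixed Euclidean ball (its extrinsic diameter is bounded by its intrinsic diameter $2r$, and $\tilde F_j(0)(p_j)\to0$), the metrics $\tilde g_j$ and $Q_j^2g(t_j)$ differ on $B_{\tilde g_j}(p_j,r)$ by a factor $\sigma_j=\sigma_j(r)\to1$. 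Combining this with the scaling identity $\vol_{Q_j^2g(t_j)}\bigl(B_{Q_j^2g(t_j)}(p_j,\rho)\bigr)=Q_j^m\,\vol_{g(t_j)}\bigl(B_{g(t_j)}(p_j,\rho/Q_j)\bigr)$ and the bound from the first step gives
\begin{equation*}
\vol_{\tilde g_j}\bigl(B_{\tilde g_j}(p_j,r)\bigr)\le \sigma_j^{2m}\,c_1\,r^m .
\end{equation*}
Finally, transporting through the Cheeger--Gromov local diffeomorphisms $\Phi_k\colon W_k\to M$ (exhausting $M_\infty$, with $\Phi_k(p_\infty)=p_{j_k}$ and $\Phi_k^*\tilde g_{j_k}\to g_\infty$ in $C^\infty_{loc}$) and using $\Phi_k\bigl(B_{g_\infty}(p_\infty,r)\bigr)\subseteq B_{\tilde g_{j_k}}\bigl(p_{j_k},(1+o(1))r\bigr)$, one obtains for each fixed $r>0$
\begin{equation*}
\vol_{g_\infty}\bigl(B_{g_\infty}(p_\infty,r)\bigr)=\lim_{k\to\infty}\vol_{\tilde g_{j_k}}\bigl(\Phi_k(B_{g_\infty}(p_\infty,r))\bigr)\le c_1\,r^m .
\end{equation*}
As $r$ was arbitrary, $(M_\infty,g_\infty)$ has Euclidean volume growth about $p_\infty$.

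The one genuinely essential step is the first one: it is precisely the boundedness of $A$, i.e.\ of $\partial_t g$, that keeps the intrinsic geometry of $M$ from degenerating, so that a fixed-size ball in $g_\infty$---which blows down to a ball of radius $r/Q_j\to0$ in $g(t_j)$ inside a fixed compact manifold---still has at most Euclidean volume. Everything after that is bookkeeping; the only point requiring care is keeping the various $\sigma_j$ and $o(1)$ comparisons of metrics, distances and volume forms uniform enough to survive the limits $j\to\infty$, $k\to\infty$ and $r/Q_j\to0$ taken in effect simultaneously. I would also remark that the estimate of the first step can be sharpened---since $g(t_j)\to g(T)$ in $C^0$ and $g(T)$ is continuous, $\vol_{g(t_j)}\bigl(B_{g(t_j)}(p_j,\rho)\bigr)=\omega_m\rho^m(1+o(1))$ uniformly in $j$ as $\rho\to0$---which upgrades the conclusion to the sharp $\vol_{g_\infty}\bigl(B_{g_\infty}(p_\infty,r)\bigr)=\omega_m r^m$ for all $r$, the form needed to force the minimal submanifold $(M_\infty,g_\infty)\hookrightarrow\reals^{m+n}$, which has $|\II_\infty(p_\infty)|=1$, to be a flat $m$-plane, and hence to reach the desired contradiction.
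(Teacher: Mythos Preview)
Your proof is correct and shares the paper's essential insight: the bound $|\partial_t g|_{g(t)}=2|A|\le 2C$ controls the intrinsic geometry uniformly in time, so that balls of radius $r/Q_j$ in $g(t_j)$ have the expected volume, and this survives rescaling and the Cheeger--Gromov limit. The organization differs. The paper invokes Lemma~\ref{glickenstein} to compare $g(t_j)$ with a fixed \emph{smooth} metric $g(t_{j_0})$ at a time $t_{j_0}$ close to $T$, and separately compares volume forms via the evolution of $\dvol$ (nonincreasing for the upper bound, and $\ge e^{-C^2\delta}$ for the lower since $|H|\le C$); together with the standard local expansion $\vol_{t_{j_0}}(B_{t_{j_0}}(\rho))/\rho^m\to\omega_m$ at a smooth point, this yields the exact equality $\vol_\infty(B_\infty(r))=\omega_m r^m$ directly, with both inequalities. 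You instead invoke Lemma~\ref{metricequiv} to pass all the way to the $C^0$-limit $g(T)$, first obtaining only the non-sharp upper bound $\le c_1 r^m$, and then recover the sharp two-sided equality in your final remark via uniform $C^0$-convergence $g(t_j)\to g(T)$. Both routes work; the paper's has the slight advantage of avoiding any appeal to a merely continuous limit metric. Do note that in this paper ``Euclidean volume growth'' means the \emph{exact} equality $\omega_m r^m$, so your sharpening remark is not optional but essential to the statement. One small correction to your closing sentence: the contradiction in the paper is not reached by forcing the limit to be a flat plane, but directly from the scalar curvature---exact Euclidean volume growth forces $R_\infty(p_\infty)=0$ via the volume expansion, while the Gauss equation gives $R_\infty(p_\infty)=\lim_j(|H_j|^2-|\II_j|^2)\le\lim_j(CQ_j^{-2}-1)=-1$.
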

\begin{proof}
	Let us use the following conventions for balls and volume forms.  $B_\infty(\rho)$ will denote the metric ball in $g_\infty$ centered at $p_\infty$; $B_j(\rho)$ will denote the metric ball in $F_{t_j}^*(Q_j^2h)$ centred at $p_j$; $B_{t_j}(\rho)$ will denote the metric ball in $F_{t_j}^*h$ centred at $p_j$.  $\vol_\infty$ will denote the volume form of $g_\infty$; $\vol_{j}$ will denote the volume form of $F_{t_j}^*(Q_j^2h)$; $\vol_{t_j}$ will denote the volume form of $F_{t_j}^*h$.  Note that
		\begin{equation}\begin{split}B_j(\rho)&=B_{t_j}(\frac{\rho}{Q_j})\\
		\vol_j&=Q_j^m\vol_{t_j}\end{split}\end{equation}
		
	We have, for any $r>0$
	\begin{equation}\label{one}\begin{split}
 	\frac{\vol_\infty(B_\infty(r))}{r^m}&=\lim_j\frac{\vol_j(B_j(r))}{r^m}\\
 	&=\lim_j\frac{\vol_{t_j}(B_{t_j}(\frac{r}{Q_j}))}{(\frac{r}{Q_j})^m}\end{split}\end{equation}
 	
 	The evolution of $g$ is \begin{equation}\begin{split}
 		\partial_t g_{ij}=-2H^\alpha h_{ij\alpha}=-2A_{ij}\end{split}\end{equation}
 	so we have $|\partial_t g|\leq C$, and in particular $g$ is uniformly continuous in time in the sense of Lemma \ref{glickenstein}.
	
	Thus we may apply Lemma \ref{glickenstein} to estimate the metric balls at any time $t_j$ by the metric ball at time $t_{j_0}$, so long as $t_j-t_{j_0}\leq \delta$.  Since $t_j\rightarrow T$, we can pick a $j_0$ so that this condition holds for all $j\geq j_0$.   So we can estimate (\ref{one}) by:
	
	\begin{equation}\begin{split}\label{two}
	 	\lim_j\frac{\vol_{t_j}(B_{t_j}(\frac{r}{Q_j}))}{(\frac{r}{Q_j})^m}&\leq \lim_j\frac{\vol_{t_j}(B_{t_{j_0}}(\frac{r}{(\sqrt{1-\epsilon}Q_j})))}{(\frac{r}{Q_j})^m}\end{split}\end{equation}
	 
	 The evolution of the volume form shows that the flow is pointwise volume-reducing. So $\vol_{t_j}\leq \vol_{t_{j_0}}$ for $j\geq j_0$.  Thus we can estimate (\ref{two}) by 

	 \begin{equation}\label{volest}\begin{split}
	 \lim_j\frac{\vol_{t_j}(B_{t_{j_0}}(\frac{r}{(\sqrt{1-\epsilon}Q_j})))}{(\frac{r}{Q_j})^m}&\leq \lim_j\frac{\vol_{t_{j_0}}B_{t_{j_0}}(\frac{r}{\sqrt{1-\epsilon}Q_j})))}{(\frac{r}{Q_j})^m}\\
	 &=(1-\epsilon)^{-\frac{m}{2}}\lim_j\frac{\vol_{t_{j_0}}(B_{t_{j_0}}(\frac{r}{\sqrt{1-\epsilon}Q_j}))}{(\frac{r}{\sqrt{1-\epsilon}Q_j})^m}\end{split}.\end{equation}
	 The only dependence of the right hand side on $j$ is in the $Q_j$.
	 
	 The limit on the right hand side of (\ref{volest}) is the local volume comparison at $p_{j_0}$ for the Riemannian manifold $(M, F_{t_{j_0}}^*h)$. It is well-known that this limit is $\omega_m$, the volume of the Euclidean unit $m$-ball.  Therefore we have
	 
	 \begin{equation}\begin{split}
	 \frac{\vol_\infty(B_\infty(r))}{r^m}\leq (1-\epsilon)^{-\frac{m}{2}}\omega_m\end{split}\end{equation}
	 Since $\epsilon$ was arbitrary, we have shown $\vol_\infty(B_\infty(r))\leq \omega_mr^m$.
	 
	 To show the reverse inequality, we make a similar argument starting from (\ref{one}), this time using the first inclusion of Lemma \ref{glickenstein}.  We now seek to estimate $\vol_{t_j}$ {\em below} by $\vol_{t_{j_0}}$.  Since we have assumed $|H|\leq C$, the evolution of $\vol$ implies that \begin{equation}\begin{split}\vol_{t_j}\geq e^{-C^2(t_j-t_{j_0})}\vol_{t_{j_0}}\end{split}\end{equation}
	 and taking $j_0$ large enough we may ensure that $e^{-C^2(t_j-t_{j_0})}\geq 1-\epsilon$.  Then we can estimate (\ref{one}) by
	 
	 \begin{equation}\begin{split}\label{three}
	 	\lim_j\frac{\vol_{t_j}(B_{t_j}(\frac{r}{Q_j}))}{(\frac{r}{Q_j})^m}\geq \lim_j (1-\epsilon)(1+\epsilon)^{-\frac{m}{2}}\frac{\vol_{t_{j_0}}(B_{t_{j_0}}(\frac{r}{\sqrt{1+\epsilon}Q_j}))}{(\frac{r}{\sqrt{1+\epsilon}Q_j})^m}\end{split}\end{equation}
	 Again we can take the limit in $j$ to get\begin{equation}\begin{split}
	 	\frac{\vol_\infty(B_\infty(r))}{r^m}\geq (1-\epsilon)(1+\epsilon)^{-\frac{m}{2}}\omega_m\end{split}\end{equation}
	 Since $\epsilon$ was arbitrary we have shown $\vol_\infty(B_\infty(r))\geq \omega_mr^m$.
	 \end{proof}

\begin{proof}[Proof of Theorem \ref{main}]
	To finish the proof of the theorem, we want to use the volume growth of $(M_\infty,g_\infty)$ to obtain a contradiction.  The expansion for the volume of balls about $p$ in $r$ is \begin{equation}\begin{split}
		\vol_\infty(B_\infty(r))=\omega_mr^m(1-\frac{R_\infty(p)}{6(m+2)}r^2 + O(r^3))\end{split}\end{equation}
	where $R_\infty(p_\infty)$ is the scalar curvature \cite{gallothulinlafontaine}. So Proposition \ref{intrinsic} immediately implies that $R_\infty(p_\infty)=0$.
	
	On the other hand, tracing the Gauss equation twice gives that\begin{equation}\begin{split}
		R_\infty(p_\infty)=&\lim_jR_j(p_j)\\
			 =& \lim_j |H_j(p_j)|^2-|\II_j(p_j)|^2\\
			 \leq&\lim_j \frac{C}{Q_j^2}-1\\
			 =&-1\end{split}\end{equation}
	 This is the desired contradiction.\end{proof}

\section{A Condition for the Blow Up of $H$}
	\begin{definition} A singularity at time $T<\infty$ is of {\em type I} if $|\II(x,t)|^2(T-t)\leq C<\infty$ for all $x\in M_t$ and all $t\in [0,T)$.\end{definition}
	This is the slowest possible rate of singularity formation, and is attained in the case of a shrinking sphere or cylinder. 
	
	We can prove that the mean curvature blows up under a slightly more general condition, namely
	\begin{theorem}Suppose that along the flow, $|\II(x,t)|^p(T-t)\leq C$ for some $p\in(1,2]$.  Then $\max_{M_t}|H|\rightarrow \infty$ as $t\rightarrow T$.\end{theorem}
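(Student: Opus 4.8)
The plan is to rerun, almost verbatim, the blow-up argument that proved Theorem \ref{main}. Assume for contradiction that $\max_{M_t}|H|\leq C$ for all $t<T$, where, as throughout this section, $T<\infty$ is the first singular time of the flow. Inspecting the proof of Theorem \ref{main}, the bound $|A|\leq C$ was used in exactly two ways: first, to conclude $|H|\leq C$, which in turn bounds $|\partial_t F|=|H|$ (keeping the images in a fixed tubular neighbourhood) and controls the volume distortion in Proposition \ref{intrinsic}; and second, to bound $|\partial_t g|=2|A|$, which is what makes Lemmas \ref{glickenstein} and \ref{metricequiv} applicable. The first use is immediate from our contradiction hypothesis, so the only thing to re-examine is the control of $|\partial_t g|$.

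The key point is the estimate
\begin{equation*}
|\partial_t g|=2|A|\leq 2|H|\,|\II|\leq 2C^{1+\frac1p}(T-t)^{-\frac1p},
\end{equation*}
which combines $A_{ij}=H^\alpha h_{ij\alpha}$ (so that $|A|\leq|H|\,|\II|$), the hypothesis $|H|\leq C$, and the standing assumption $|\II|^p(T-t)\leq C$. Because $p\in(1,2]$ we have $\frac1p<1$, so $(T-t)^{-1/p}$ is integrable on $[0,T)$ and
\begin{equation*}
\int_{t_0}^{T}\max_{M_t}|\partial_t g|\,dt\;\leq\;\frac{2p}{p-1}\,C^{1+\frac1p}\,(T-t_0)^{1-\frac1p}\;\longrightarrow\;0\quad\text{as }t_0\to T.
\end{equation*}
In particular $\int_0^T\max_{M_t}|\partial_t g|\,dt<\infty$, so Lemma \ref{metricequiv} gives that the $g(t)$ are uniformly equivalent and converge to a continuous positive-definite metric $g(T)$; and the standard ODE comparison (which controls $g(t)$ against $g(t_0)$ by $e^{\pm 2\int_{t_0}^t\max_{M_s}|\partial_s g|\,ds}$), together with the vanishing of the tail above, shows that for $t_0$ close enough to $T$ one has $(1-\epsilon)g(t_0)\leq g(t)\leq(1+\epsilon)g(t_0)$ for all $t\in[t_0,T)$ — precisely the hypothesis that Lemma \ref{glickenstein} and Proposition \ref{intrinsic} require.

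Granting this, the rest is a transcription of the earlier argument. Huisken's Theorem \ref{huisken} gives $\max_M|\II(t)|\to\infty$; pick $(p_j,t_j)$ with $Q_j:=\max_{t\leq t_j}|\II|=|\II(p_j,t_j)|\to\infty$ and, after a subsequence, $F(p_j,t_j)\to p_0$ inside a ball below the injectivity radius of $(N,h)$, using $|\partial_t F|\leq C$. Rescale by $Q_j$ in space and $Q_j^{-2}$ in time as in Lemma \ref{bound} to obtain mean curvature flows with $|\II_j|\leq|\II_j(p_j,0)|=1$ into $(B^N,Q_j^2h)$, with $(B^N,Q_j^2h,p_0)\to(\reals^{m+n},dx^2,0)$; pull back by the exhausting embeddings $\psi_j$, apply Theorem \ref{bounds} for uniform bounds on $|\nabla^s\tilde\II_j|$ (hence on $|\overline\nabla^s\overline\II_j|$), Theorem \ref{injectivity} for $\inj\geq\tfrac1{2\sqrt2}$, and the Gauss equation for curvature and derivative bounds, so that Cheeger-Gromov produces a limit $(M_\infty,g_\infty,p_\infty)$. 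Proposition \ref{intrinsic} then applies unchanged — the flow is still pointwise volume-reducing, and $|H|\leq C$ still yields $\vol_{t_j}\geq e^{-C^2(t_j-t_{j_0})}\vol_{t_{j_0}}$ — so $(M_\infty,g_\infty)$ has Euclidean volume growth about $p_\infty$, forcing $R_\infty(p_\infty)=0$. But tracing the Gauss equation twice gives $R_\infty(p_\infty)=\lim_j(|H_j(p_j)|^2-|\II_j(p_j)|^2)\leq\lim_j C/Q_j^2-1=-1$, the desired contradiction.

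The one genuinely new point — and the only real obstacle — is the step in the second paragraph: in Theorem \ref{main} the quantity $|\partial_t g|$ was \emph{bounded}, whereas now it is merely \emph{integrable in time}, with an integrable singularity at $t=T$, and one must verify that this weaker control still feeds Lemmas \ref{glickenstein} and \ref{metricequiv}. The restriction to $p\in(1,2]$ is exactly what this step dictates: at the endpoint $p=1$ the bound $|\partial_t g|\lesssim(T-t)^{-1}$ fails to be integrable, while the upper endpoint $p=2$ is harmless since $\tfrac12<1$. Everything else in the argument is identical to the proof of Theorem \ref{main}.
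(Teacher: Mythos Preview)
Your proposal is correct and follows essentially the same approach as the paper: both argue by contradiction, isolate the single new ingredient as the estimate $|\partial_t g|=2|A|\leq 2|H|\,|\II|\lesssim (T-t)^{-1/p}$, use its integrability for $p>1$ together with Lemma \ref{metricequiv} to obtain uniform equivalence and then the uniform continuity needed for Lemma \ref{glickenstein}, and defer the remainder to the proof of Theorem \ref{main}. The only cosmetic difference is that the paper integrates $|\partial_t g|$ in the fixed norm $|\cdot|_0$ after invoking Lemma \ref{metricequiv}, whereas you invoke the exponential ODE comparison directly; these are equivalent.
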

	\begin{proof} For the purpose of contradiction, suppose $|H|\leq C$ all along the flow.
	
	As in the proof of Theorem \ref{main}, we consider the parabolic rescales \begin{equation}
		F_j(p,t)=F(p,t_j+\frac{t}{Q_j^2}):M\hookrightarrow (N, Q_j^2h)\end{equation}
	where $Q_j=\max_{t\leq t_j}|\II|$.  Each $F_j$ is a mean curvature flow, and we have \begin{equation}|H_j|\leq \frac{C}{Q_j}.\end{equation}  As in the proof of Theorem \ref{main}, we want to obtain a limit manifold $(M_\infty,g_\infty)$ whose volume growth yields a contradiction. 
	
	To proceed to a contradiction as in the proof of Theorem \ref{main}, we need to establish that the metrics $g(t)$ are uniformly continuous in time in the sense of Lemma \ref{glickenstein}.
	
	Consider the evolution of the metric:\begin{equation}\label{est}\begin{split}
		|\frac{\partial g}{\partial t}|=2|A|&\leq 2|H||\II|\\
		&\leq 2C (T-t)^{-\frac{1}{p}}\end{split}\end{equation}
	
	We want to integrate the inequality (\ref{est}) to estimate $|g(t_1)-g(t_0)|_0$, where $|\cdot|_0$ is the norm on two-tensors induced from $g(0)$. Lemma \ref{metricequiv} together with (\ref{est}) imply that the metrics $g(t)$ are uniformly equivalent. Thus at the expense of a uniform constant (which we will absorb into $C$) we may estimate\begin{equation}\label{intest}\begin{split}
		|g(t_1)-g(t_0)|_0=|\int \frac{\partial g}{\partial t}dt|_0&\leq \int |\frac{\partial g}{\partial t}|_0 dt\\
		&\leq C\int|\frac{\partial g}{\partial t}|_t dt\\
		&\leq 2C \int (T-t)^{-\frac{1}{p}}dt\\
		&=\frac{2C}{1-\frac{1}{p}}\left((T-t_0)^{1-\frac{1}{p}}-(T-t_1)^{1-\frac{1}{p}}\right)\end{split}\end{equation}
		
	The function $(T-t)^{1-\frac{1}{p}}$ is uniformly continuous for $t\in[0,T)$, so we see that the metrics $g(t)$ are uniformly continuous in $t$ with respect to the norm $|\cdot|_0$.  Since the metrics are all uniformly equivalent, this implies uniform continuity in the sense of Lemma \ref{glickenstein}.
	
	Then the proof of the theorem proceeds just as in Theorem \ref{main}.\end{proof}

\begin{remark}The tensors $H$ and $A$ are either both zero or both nonzero. A bound on $A$ implies that $H$ is bounded.  Theorem \ref{main} says that $H$ cannot decay ``too fast" relative to $\II$. The following question presents itself:
\begin{question}\label{question} Suppose $T<\infty$ is the first singular time for a compact mean curvature flow.  Is it generally true that $\max_{M_t}|H|\rightarrow\infty$ as $t\rightarrow T$?
	\end{question}\end{remark}	
 
\section*{Acknowledgements}
The author wishes to thank his adviser Jon Wolfson for his help and suggestions.  He also wishes to thank Xiaodong Wang for suggesting this problem.

\bibliography{mcf}{}
\bibliographystyle{plain}

\end{document}